\theoremstyle{definition}
\newtheorem{definition}{Definition}[section]
\newtheorem{beispiel}[definition]{Example}
\popQED\end{beispiel}}
\theoremstyle{plain}
\newtheorem{theorem}[definition]{Theorem}
\newtheorem{lemma}[definition]{Lemma}
\newtheorem{proposition}[definition]{Proposition}
\numberwithin{equation}{section}
\DeclareMathOperator{\hdim}{hdim}
\DeclareMathOperator{\hspec}{hspec}
\DeclareMathOperator{\wt}{wt}
\DeclareMathOperator{\dens}{den}
\newcommand{\F}{\mathbb{F}}
\newcommand{\Z}{\mathbb{Z}}
\newcommand{\Q}{\mathbb{Q}}
\newcommand{\N}{\mathbb{N}}
\newcommand{\R}{\mathbb{R}}
\newcommand{\aug}{\mathfrak{a}}
\begin{document}

\title[On the Hausdorff spectra  of free pro-$p$ groups]{On the Hausdorff spectra  of free pro-$p$ groups and certain $p$-adic analytic groups}

\author[I. de las Heras]{Iker de las Heras} 
\address{Iker de las Heras:
  Department of Mathematics, University of the Basque Country UPV/EHU, 48940 Leioa, Spain}
\email{iker.delasheras@ehu.eus}

\author[B. Klopsch]{Benjamin Klopsch}
\address{Benjamin Klopsch:
  Heinrich-Heine-Universit\"at D\"usseldorf,
  Mathematisch-Naturwissenschafltiche Fakult\"at, Mathematisches Institut}
\email{klopsch@math.uni-duesseldorf.de}

\author[A. Thillaisundaram]{Anitha Thillaisundaram} 
\address{Anitha Thillaisundaram:
  Centre for Mathematical Sciences, Lund University, 223 62 Lund, Sweden}
\email{anitha.thillaisundaram@math.lu.se}

\thanks{\textit{Funding acknowledgements.} The first author was  supported by the Spanish Government, grant MTM2017-86802-P, partly  with FEDER funds, and by the Basque Government, grant IT974-16; he  received funding from the European Union's Horizon 2021 research and  innovation programme under the Marie Sklodowska-Curie grant  agreement, project 101067088. The second author received support  from the Knut and Alice Wallenberg Foundation and by the Royal Physiographic Society of Lund.  The third author  acknowledges support from EPSRC, grant EP/T005068/1 and from the  Folke Lann\'{e}r's Fund.  The research was partially conducted in  the framework of the DFG-funded research training group GRK 2240: Algebro-Geometric Methods in Algebra, Arithmetic and Topology.}
    
\keywords{Lower $p$-series, $p$-adic analytic groups, Hausdorff dimension, Hausdorff spectrum, free pro\nobreakdash-$p$ groups}

\subjclass[2020]{Primary 20E18; Secondary 22E20, 28A78}


\begin{abstract} 
  We establish that finitely generated non-abelian direct products $G$
  of free pro\nobreakdash-$p$ groups have full Hausdorff spectrum
  $\hspec^{\mathcal{L}}(G) = [0,1]$ with respect to the
    lower $p$-series $\mathcal{L}$.  This complements similar results
    with respect to other standard filtration series and a recent
    theorem showing that
    the Hausdorff spectrum $\hspec^\mathcal{L}(G)$ of a $p$-adic
    analytic pro\nobreakdash-$p$ group~$G$ is discrete and consists of
    at most $2^{\dim(G)}$ rational numbers.

  The latter also left some room for improvement
    regarding the upper bound.  Indeed, for finitely generated
    nilpotent pro-$p$ groups $G$ we obtain the stronger assertion
    $\lvert \hspec^\mathcal{L}(G) \rvert \le \dim(G)$.  Moreover, we
    produce a corresponding result when the $p$-adic analytic
  pro\nobreakdash-$p$ group $G$ is just infinite, which holds
  not just for $\mathcal{L}$ but for arbitrary filtration
  series.
  
  Finally, we show that, if $G$ is a countably based
  pro\nobreakdash-$p$ group with an open subgroup mapping onto the
  free abelian pro\nobreakdash-$p$ group $\Z_p \oplus \Z_p$, then for
  every prescribed finite set $\{0,1\} \subseteq X \subseteq [0,1]$
  there is a filtration series~$\mathcal{S}$ such that
  $\hspec^\mathcal{S}(G) = X$; in particular,
  $\lvert \hspec^{\mathcal{S}}(G) \rvert$ is unbounded, as
  $\mathcal{S}$ runs through all filtration series of~$G$ with
  $\lvert \hspec^{\mathcal{S}}(G) \rvert < \infty$.
\end{abstract}

\maketitle


\section{Introduction}

Let $p$ denote a prime throughout. This paper concerns the Hausdorff spectra of certain infinite finitely generated
pro\nobreakdash-$p$ groups, mainly with respect to the lower
$p$-series.  Recall that for a finitely generated pro\nobreakdash-$p$
group~$G$, the \emph{lower $p$-series}
$\mathcal{L}\colon P_i(G),\, i \in \N$, -- sometimes
called the lower $p$-central series -- is defined recursively as
follows:
\begin{align*}
  P_1(G) = G,  %
  & \quad \text{and} \quad  P_i(G) = \overline{P_{i-1}(G)^p
    \, [P_{i-1}(G),G]} \quad
    \text{for $i\in \N$ with $i \ge 2$.}  
\end{align*}

\smallskip

The study of Hausdorff dimension in profinite groups is a
current though established subject; for
instance, see \cite{KlThZu19} for an overview and
\cite{KlTh19,GaGaKl20,GoZo21,HeKl22,HeTh22a,HeTh22b, FA} for some
recent developments.  We recall here a group-theoretical
description of Hausdorff dimension (cf. \cite{Ab94,BaSh97}): let $G$
be an infinite finitely generated profinite group and
let $\mathcal{S} \colon G_i,\, i \in \N_0$, be a \emph{filtration
  series} of~$G$, that is, a descending chain
$G = G_0 \ge G_1 \ge \ldots$ of open normal subgroups
$G_i \trianglelefteq_\mathrm{o} G$ such that
$\bigcap_{i \in \N_0} G_i = 1$.  The filtration series
  $\mathcal{S}$ yields a translation-invariant metric on~$G$ given by
  $d^\mathcal{S}(x,y) = \inf \left\{ \lvert G : G_i \rvert^{-1} \mid x
    \equiv y \pmod{ G_i} \right\}$, for $x,y \in G$.  This gives rise
  to the \emph{Hausdorff dimension} $\hdim_G^\mathcal{S}(U) \in [0,1]$
  of any subset $U \subseteq G$, with respect to the filtration
  series~$\mathcal{S}$. The Hausdorff dimension of a
closed subgroup $H \le_\mathrm{c} G$, with respect to~$\mathcal{S}$,
has an equivalent algebraic interpretation as
\begin{equation*}
  \hdim_G^\mathcal{S}(H) =
  \varliminf_{i\to \infty} \frac{\log \lvert HG_i : G_i
    \rvert}{\log \lvert G : G_i \rvert},
\end{equation*}
and the \emph{Hausdorff spectrum} of $G$, with respect to
$\mathcal{S}$, is
\[
  \hspec^\mathcal{S}(G) = \{ \hdim_G^\mathcal{S}(H) \mid H
  \le_\mathrm{c} G\} \subseteq [0,1].
\]
We say that $H$ has \emph{strong Hausdorff
  dimension} in $G$, with respect to $\mathcal{S}$, if
\[
  \hdim^{\mathcal{S}}_G(H) = \lim_{i \to \infty} \frac{\log \lvert H
    G_i : G_i \rvert}{\log \lvert G : G_i \rvert}
\]
is given by a proper limit.

It is known that even for well-behaved groups, such as $p$-adic
analytic pro\nobreakdash-$p$ groups~$G$, the Hausdorff dimension of
individual subgroups, and also the Hausdorff spectrum of~$G$, are
sensitive to the choice of~$\mathcal{S}$;
see~\cite[Thm.~1.3]{KlThZu19}.  For a finitely generated
pro\nobreakdash-$p$ group~$G$, the standard choices of $\mathcal{S}$
are primarily the $p$-power series~$\mathcal{P}$, the iterated
$p$-power series~$\mathcal{P}^*$, the Frattini series~$\mathcal{F}$,
the dimension subgroup (or Zassenhaus) series~$\mathcal{D}$ and the
lower $p$-series~$\mathcal{L}$.  If $G$ is a $p$-adic analytic
pro\nobreakdash-$p$ group, then from~\cite[Prop.~1.5]{KlThZu19}, the
Hausdorff dimensions of $H \le_\mathrm{c} G$ with respect to the
series $\mathcal{P}, \mathcal{F}, \mathcal{D}$ coincide,
and~\cite[Thm.~1.1]{BaSh97} yields that, for $\mathcal{S}$ any one of
these series $\mathcal{P}, \mathcal{F}, \mathcal{D}$, the spectrum
\[
  \hspec^\mathcal{S}(G) = \Big\{ \nicefrac{\displaystyle
    \dim(H)}{\displaystyle \dim(G)} \mid H \le_\mathrm{c} G \Big\}
\]
is discrete and that all closed subgroups have strong Hausdorff
dimension in $G$ with respect to $\mathcal{S}$. We note that these
results also hold with respect to the
series~$\mathcal{P}^*$. Regarding the series $\mathcal{L}$, the form
of the Hausdorff spectrum $\hspec^{\mathcal{L}}(G)$, and its
finiteness, was only recently established
in~\cite[Thm.~1.5]{HeKlTh25}.

It remains an outstanding problem in the subject to understand, more
generally, what renders the Hausdorff spectrum $\hspec^\mathcal{S}(G)$
of a finitely generated pro\nobreakdash-$p$ group $G$ finite. In
particular, it is open whether finiteness of the spectrum with respect
to one of the standard filtration series
$\mathcal{P}, \mathcal{P}^*, \mathcal{F}, \mathcal{D}, \mathcal{L}$
implies that $G$ is $p$-adic analytic; compare with
\cite[Prob.~1.4]{KlThZu19}.
It is of interest to identify and examine potential
  obstacles to such a new characterization of $p$-adic analytic
  pro\nobreakdash-$p$ groups.  For the filtration series
$\mathcal{F}$ and $\mathcal{D}$, it was shown in~\cite{GaGaKl20} that
free pro\nobreakdash-$p$ groups $F$ can be cancelled from the
candidate list of possible counterexamples; in fact,
non-abelian finite direct products $G$ of free pro\nobreakdash-$p$
groups have full Hausdorff spectrum
$\hspec^\mathcal{F}(G) = \hspec^\mathcal{D}(G) = [0,1]$. Using
Lie-theoretic tools, we produce an analogous result for the lower
$p$\nobreakdash-series~$\mathcal{L}$.

\begin{theorem} \label{thm:direct-prod-of-free-pro-p} 
  Let $G = F_1 \times \ldots \times F_r$ be a finite direct product of finitely generated free pro\nobreakdash-$p$ groups $F_j$, at least one of which is non-abelian.  Then $G$ has full Hausdorff spectrum $\hspec^{\mathcal{L}}(G) = [0,1]$ with respect to the lower $p$-series~$\mathcal{L}$.
\end{theorem}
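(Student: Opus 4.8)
\medskip
\noindent\textbf{Proof proposal.}
The plan is to pass to the graded Lie algebra attached to the lower $p$-series, reinterpret the Hausdorff dimension of a closed subgroup as the asymptotic density of a graded Lie subalgebra, then manufacture subalgebras — and hence closed subgroups — of every prescribed density inside a single free pro-$p$ factor, and finally read off the spectrum of $G$ by a short averaging argument over the factors.

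Concretely, fix a finitely generated free pro-$p$ group $F$ of rank $d$ and set $L=\bigoplus_{n\ge1}P_n(F)/P_{n+1}(F)$. The commutator induces a Lie bracket on $L$ and the $p$-th power induces a degree-raising $\F_p$-linear operator $\tau$ with $[\tau a,b]=\tau[a,b]$ (for $p$ odd; $p=2$ needs a harmless variant), making $L$ the free graded Lie algebra over $R:=\F_p[\tau]$ on $d$ generators of degree $1$; equivalently $U_R(L)$ is the free associative $R$-algebra on $x_1,\dots,x_d$. The Witt formula then gives $\dim_{\F_p}L_n\asymp d^n/n$, hence $\log_p\lvert F:P_n(F)\rvert=\sum_{k<n}\dim_{\F_p}L_k=:A_n\asymp d^n/n$. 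For a closed subgroup $K\le_\mathrm{c}F$ the quotients $\overline K_n:=(K\cap P_n(F))\,P_{n+1}(F)/P_{n+1}(F)$ form a graded Lie $R$-subalgebra $\overline K\le L$, and
\[
  \hdim_F^{\mathcal L}(K)=\varliminf_{n\to\infty}\frac{\sum_{k<n}\dim_{\F_p}\overline K_k}{A_n}.
\]
So it is enough to produce, for every $\gamma\in[0,1]$, a closed subgroup $K$ whose graded subalgebra $M:=\overline K$ satisfies $\dim_{\F_p}M_n/\dim_{\F_p}L_n\to\gamma$; since this is a genuine limit, $K$ then even has strong Hausdorff dimension $\gamma$, which will matter for the last step.

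For $\gamma\in(0,1)$ (the cases $\gamma=0,1$ being handled by $K=1$ and $K=F$) I would build a \emph{free} graded Lie $R$-subalgebra $M\le L$ of density $\gamma$ as follows. Choose non-negative integers $c_k$, the intended number of free generators of $M$ in degree $k$, so that $g(q):=\sum_{k\ge1}c_kq^k$ satisfies $g(1/d)=1$ (a Kraft-type equality, forcing the exponential growth rate of $M$ to be $d$) while $1-g(q)$ has a branch-point singularity $\sim(1-dq)^{\gamma}$ at $q=1/d$ rather than a simple zero: take $c_k$ close to the $k$-th Taylor coefficient of $1-(1-dq)^{\gamma}$ (which are positive of order $\asymp d^kk^{-\gamma-1}$), round to integers — the rounding-error series being analytic at $q=1/d$ — and add a bounded correction to restore $g(1/d)=1$. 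For the free Lie $R$-subalgebra $M$ on a homogeneous generating set with $c_k$ generators in degree $k$, the enveloping algebra $U_R(M)$ is free associative with Hilbert series $1/(1-g(q))$; a transfer-theorem analysis of its singularity at $q=1/d$, combined with the logarithmic identity $-\log(1-g(q))=\sum_{N}N^{-1}\big(\sum_{m\mid N}m\,W^M_m\big)q^N$ relating it to the Witt numbers $W^M_m$ of $M$, yields $W^M_m\sim\gamma\,d^m/m$ and hence $\dim_{\F_p}M_n\sim\gamma\,\dim_{\F_p}L_n$. That such an $M$ genuinely embeds in $L$ is a bookkeeping point — there is room since $\dim_{\F_p}M_n<\dim_{\F_p}L_n$ for all $n$, and one picks the generators degree by degree among suitable Lyndon-type basic commutators in $x_1,\dots,x_d$ so that they remain $R$-free. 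To realise $M$ by a subgroup, lift each chosen generator $\bar y_{k,i}\in L_k$ to $y_{k,i}\in P_k(F)$ and put $K:=\overline{\langle\,y_{k,i}\,\rangle}$; the inclusion $\overline K\supseteq M$ is automatic, and for $\overline K\subseteq M$ one shows by induction on the filtration degree that every element of $K\cap P_n(F)$ is, modulo $P_{n+1}(F)$, an $\F_p$-combination of $\tau$-power-weighted iterated commutators in the finitely many generators $y_{k,i}$ with $k\le n$ — a standard leading-term computation in the filtered group $F$, again using freeness of $U_R(L)$. Thus $\hdim_F^{\mathcal L}(K)=\gamma$ with strong Hausdorff dimension.

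Lastly, for $G=F_1\times\dots\times F_r$ let $d_j$ be the rank of $F_j$, put $D:=\max_jd_j\ge2$ and $m:=\#\{j:d_j=D\}\ge1$. Since the lower $p$-series of a direct product is the direct product of the lower $p$-series, $\log_p\lvert G:P_n(G)\rvert=\sum_jA_n^{(d_j)}=m\,A_n^{(D)}(1+o(1))$, the factors with $d_j<D$ (in particular any copies of $\Z_p$) contributing only $o\big(A_n^{(D)}\big)$. Given $\alpha\in(0,1)$, pick $K_0\le_\mathrm{c}F$ of strong Hausdorff dimension $\alpha$ in a free pro-$p$ group $F$ of rank $D$, and let $H$ be the product over all $j$ of a copy of $K_0$ inside $F_j$ whenever $d_j=D$ and of $F_j$ otherwise; then $\log_p\lvert HP_n(G):P_n(G)\rvert=m\,\alpha\,A_n^{(D)}(1+o(1))$ gives $\hdim_G^{\mathcal L}(H)=\alpha$, and with $\hdim_G^{\mathcal L}(1)=0$, $\hdim_G^{\mathcal L}(G)=1$ we conclude $\hspec^{\mathcal L}(G)=[0,1]$. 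I expect the main obstacle to lie at the Lie-algebra/group interface in the third step: both the $R$-free choice of generators of $M$ with prescribed degree counts, and above all the leading-term argument identifying $\overline K$ with $M$ for the explicitly constructed $K$. The generating-function estimate producing density exactly $\gamma$ is delicate but essentially routine, and the product reduction is immediate.
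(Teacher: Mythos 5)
Your overall skeleton is the same as the paper's: pass to the graded mixed Lie algebra of the lower $p$-series, viewed as a free $\F_p[\pi]$-Lie algebra (with the usual $p=2$ caveat), identify $\hdim^{\mathcal L}_F$ of a closed subgroup with the density of a graded subalgebra, realise a subalgebra of prescribed density by lifting (generalised) basic commutators to the group, and finish by a product reduction in which the small-rank factors are negligible. (Minor remark on the last step: the paper puts a proper subgroup into only one maximal-rank factor and interpolates, so that a bare $\varliminf$ suffices; your averaging over all $m$ maximal-rank factors needs either the strong/limit form of the dimension, which your construction is supposed to deliver, or literally identical synchronised copies of $K_0$.) However, there are two genuine gaps, and they sit exactly where the paper does its real work.

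First, the construction of $M$. Prescribing $c_k$ free generators per degree and asserting that one can pick them "among Lyndon-type basic commutators'' so that "they remain $R$-free'' because "there is room'' is not a proof: a set of distinct basic commutators need not freely generate the subalgebra it generates (e.g.\ $\mathsf{x}_1,\mathsf{x}_2,[\mathsf{x}_2,\mathsf{x}_1]$), dimension counting alone does not guarantee that a degree-by-degree choice stays a free generating set, and over $R=\F_p[\pi]$ you cannot fall back on the Shirshov--Witt theorem. If freeness fails, your Hilbert-series/singularity computation of the density (which presupposes $U_R(M)$ free with series $1/(1-g(q))$) collapses. The paper sidesteps any exact generator count: it builds the subalgebra inductively, adding generalised basic commutators when the running density is below $\alpha$ and pausing otherwise, and the engine making this converge is the Bahturin--Olshanskii theorem that finitely generated proper graded subalgebras of a free Lie algebra have density zero, transferred to the $\F_p[\pi]$ setting (Proposition~\ref{prop:fg-graded-0} and Lemma~\ref{lem:fg-graded-0-transfer}). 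Second, and more seriously, the identification $\overline{K}=M$ is not "a standard leading-term computation''. In general the graded algebra of the closed subgroup generated by homogeneous lifts strictly contains the Lie subalgebra generated by the leading terms: the inclusion $\supseteq$ is the trivial direction, and the reverse inclusion --- controlling the leading terms of arbitrary words in the generators --- is the crux. The paper proves it by running Hall's collection process, and the argument works only because the Lie generators are generalised basic commutators with pairwise distinct core parts and the group generators are the corresponding $p^k$-th powers of basic group commutators; the uniqueness of the collected form, the weight bookkeeping for the $p$-power "decorations'', and the linear independence of generalised basic Lie commutators are all needed. Freeness of $U_R(L)$ alone does not yield this, and for generators that are not of basic-commutator type no such statement is available off the shelf. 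Since you leave precisely this step --- which you yourself flag as the main obstacle --- unproved, the proposal has a genuine gap at the heart of the argument; the generating-function route to the density is an attractive alternative to the paper's mechanism, but it would still have to be welded to a collection-process (or comparable) argument to close it.
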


It remains a challenge to compute the Hausdorff spectra with respect to the lower $p$-series for pro\nobreakdash-$p$ groups of positive rank gradient, such as non-soluble Demushkin pro\nobreakdash-$p$ groups.  This would further complement the results
in~\cite{GaGaKl20}.

\medskip

Next, let $G$ be an infinite $p$-adic analytic pro-$p$ group. Recall
that in~\cite[Thm.~1.5]{HeKlTh25}, it was shown that the Hausdorff
spectrum $\hspec^\mathcal{L}(G)$ is discrete and consists of at most
$2^{\dim(G)}$ rational numbers. Here we prove that under
additional assumptions on $G$, the following stronger results can be
obtained, which partly extend even to arbitrary filtration series.

\begin{theorem}\label{thm:just-infinite-or-nilpotent}
  Let $G$ be an infinite $p$-adic analytic pro\nobreakdash-$p$ group.

  \smallskip
  
  \noindent \textup{(1)} If $G$ is just infinite, then
  every closed subgroup $H \le_\mathrm{c} G$ has strong Hausdorff
  dimension $\hdim_G^\mathcal{S}(H) = \dim(H) / \dim(G)$ for every
  filtration series $\mathcal{S}$.

  \smallskip
  
  \noindent \textup{(2)} If $G$ is nilpotent, then every
  closed subgroup $H \le_\mathrm{c} G$ has strong Hausdorff dimension
  $\hdim_G^\mathcal{S}(H) = \dim(H) / \dim(G)$ for
  $\mathcal{S}=\mathcal{L}$.

  \smallskip
  
  In each of the two cases, the Hausdorff spectrum of $G$
  with respect to the relevant filtration series is equal to
  $\Big\{ \nicefrac{\displaystyle \dim(H)}{\displaystyle \dim(G)} \mid
  H \le_\mathrm{c} G \Big\}$.
\end{theorem}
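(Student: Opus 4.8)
The plan is to translate each statement into linear algebra over $\Z_p$ through a Lie-theoretic correspondence; in both cases the essential input is that the relevant filtration corresponds, up to uniformly bounded error, to a chain of \emph{balanced} lattices, meaning lattices $M$ inside a fixed reference lattice $L$ with $p^{\,a+c}L\subseteq M\subseteq p^{\,a}L$ for a fixed constant $c$ and some $a=a(M)$.

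For part~(2), I would first replace $G$ by a torsion-free open subgroup of finite index, which alters every relevant index only by $O(1)$, and then invoke the Mal'cev--Lazard correspondence: there is a $\Z_p$-Lie lattice $\mathcal L$ and a measure-preserving homeomorphism $\log\colon G\to\mathcal L$ under which closed subgroups correspond to Lie sublattices, so that for $H\le_{\mathrm c}G$ with attached sublattice $L_H$ one has $\dim(H)=\mathrm{rk}_{\Z_p}L_H$, and $H\cap P_i(G)$ corresponds to $L_H\cap\lambda_i(\mathcal L)$, where $\lambda_i(\mathcal L)$ is the $i$-th term of the lower $p$-central series of the Lie lattice $\mathcal L$. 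The key point is the explicit formula $\lambda_i(\mathcal L)=\sum_{j=1}^{c}p^{\,i-j}\gamma_j(\mathcal L)$ for $i\ge c$, where $c$ is the nilpotency class; this gives $p^{\,i-1}\mathcal L\subseteq\lambda_i(\mathcal L)\subseteq p^{\,i-c}\mathcal L$, and likewise inside $L_H$ with its own class $c'\le c$. Combined with the elementary purity estimate $L_H\cap p^{\,k}\mathcal L\subseteq p^{\,k-b}L_H$ for a fixed $b$ (measuring how far $L_H$ is from being pure in $\mathcal L$), one reads off $\log_p\lvert G:P_i(G)\rvert=i\dim(G)+O(1)$ and $\log_p\lvert H:H\cap P_i(G)\rvert=i\dim(H)+O(1)$, hence $\hdim^{\mathcal L}_G(H)=\dim(H)/\dim(G)$ as a genuine limit, and the stated description of $\hspec^{\mathcal L}(G)$.

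For part~(1) the decisive step is that just infiniteness forces the $\Q_p$-Lie algebra $\mathfrak g$ of $G$ to be \emph{irreducible} as a $\Q_p[G]$-module under the adjoint action. Indeed, any $\mathrm{Ad}(G)$-invariant subspace $W\le\mathfrak g$ is automatically an ideal: $\mathrm{Ad}(G)$ lies in the stabiliser of $W$ in $\GL(\mathfrak g)$, so its Lie algebra $\mathrm{ad}(\mathfrak g)$ stabilises $W$, i.e.\ $[\mathfrak g,W]\subseteq W$; and a proper nonzero $\mathrm{Ad}(G)$-invariant ideal would --- after scaling into a fixed lattice $L=\log(U)$ of an open normal uniform subgroup $U\trianglelefteq G$, passing to a powerful sublattice, and applying $\exp$ --- yield a closed normal subgroup of $G$ of dimension strictly between $0$ and $\dim(G)$, contradicting just infiniteness. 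Irreducibility makes $\mathrm{Ad}(\Q_p[G])$ a simple $\Q_p$-algebra whose unique simple module is $\mathfrak g$, so the $\mathrm{Ad}(G)$-invariant lattices in $\mathfrak g$ are precisely the lattices over the order $R=\mathrm{Ad}(\Z_p[G])$; by a Morita-equivalence argument these form a single homothety class up to a bounded conductor correction coming from the possible non-maximality of $R$, yielding a constant $c$ such that \emph{every} $\mathrm{Ad}(G)$-invariant full-rank sublattice $M\subseteq L$ is balanced in the sense above.

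Finally, given any filtration series $\mathcal S\colon G_i$ one may assume each $G_i\neq 1$ is open (just infiniteness). Then $M_i:=\mathrm{span}_{\Z_p}\log(U\cap G_i)$ is an $\mathrm{Ad}(G)$-invariant full-rank lattice in $L$; since $\log$ is measure-preserving on uniform groups one has $\log_p\lvert U:U\cap G_i\rvert=\log_p\lvert L:M_i\rvert+O(1)$, and balancedness then gives $\log_p\lvert G:G_i\rvert=a_i\dim(G)+O(1)$ with $a_i=a(M_i)\to\infty$ (as $G_i\to 1$). Carrying out the analogous estimate on a fixed uniform open subgroup of $H\cap U$ --- and using that $\log$ of an open subgroup of a uniform group contains a bounded dilate of its own $\Z_p$-span, so that $U\cap G_i$ really does contain $\exp(p^{\,a_i+c'}L)$ for a fixed $c'$ --- produces $\log_p\lvert H:H\cap G_i\rvert=a_i\dim(H)+O(1)$, and division yields $\hdim^{\mathcal S}_G(H)=\dim(H)/\dim(G)$ as a proper limit, whence the spectrum. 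I expect the main obstacle to be exactly this part of~(1): the rigidity of $\mathrm{Ad}(G)$-invariant lattices, together with the book-keeping needed to pass between (possibly non-uniform) open subgroups of $G$ and lattices in $L$ with error independent of the subgroup --- which rests on the standard but somewhat delicate fact that $\log_p\lvert U:W\rvert=\log_p\lvert L:\mathrm{span}_{\Z_p}\log(W)\rvert+O(1)$ uniformly over open $W\le U$. In part~(2) the only points requiring care beyond the linear algebra are the reduction to the torsion-free case and the exactness of the Mal'cev correspondence for closed pro-$p$ subgroups.
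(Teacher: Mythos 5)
Your part (1) is viable and close in spirit to the paper's argument: the two pillars -- that just infiniteness makes $\Q_p\otimes_{\Z_p}L$ a simple $\Q_pG$-module, and that simplicity forces every open $G$-invariant sublattice to be balanced between $p^aL$ and $p^{a+c}L$ for a fixed $c$ ("finite rigidity") -- are exactly the inputs the paper uses. The paper quotes rigidity from \cite[Prop.~2.2]{HeKlTh25} (a variant of \cite[Prop.~4.3]{Kl03}) and gets simplicity from the space-group structure in the soluble case and from \cite[Prop.~III.6]{KLP97} together with Lemma~\ref{lem:a-b-c-equiv} in the insoluble case, whereas your direct argument (an $\mathrm{Ad}(G)$-invariant subspace is an ideal, and a proper ideal exponentiates to a proper infinite normal subgroup) treats both cases at once, which is a genuine simplification; your sketch of rigidity via orders in a simple $\Q_p$-algebra is essentially the standard proof of the quoted proposition. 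What you assert rather than prove is the uniform bookkeeping: that $\log_p\lvert U:W\rvert$ agrees with $\log_p\lvert L:\mathrm{span}_{\Z_p}\log W\rvert$ up to $O(1)$ and that $\log W$ contains a bounded dilate of its span, uniformly over open $W\le U$. This is true but not free; the paper sidesteps it by shrinking $U$ so that every open normal subgroup of $G$ inside $U$ is powerfully embedded (so $U\cap G_i$ corresponds exactly to a $G$-invariant sublattice), and alternatively one can use that closed subgroups of $U$ have rank at most $\dim U$, hence contain characteristic powerful open subgroups of index bounded in terms of $\dim U$ only. Some such patch must be supplied.

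The genuine gap is in part (2). The theorem concerns the lower $p$-series of $G$ itself, but after passing to a torsion-free open subgroup $G'$ you compute with the lower $p$-series $\lambda_i$ of the Lie lattice of $G'$. The claimed exact correspondence (closed subgroups of $G'$ correspond to sublattices, and $H\cap P_i(G)$ corresponds to $L_H\cap\lambda_i(\mathcal L)$) is false in general: even for torsion-free nilpotent $p$-adic analytic groups the Mal'cev--Lazard dictionary is exact only under saturability/class-less-than-$p$ type hypotheses, and in any case it would relate $\lambda_i(\mathcal L)$ to $P_i(G')$, not to $P_i(G)\cap G'$. Your remark that the reduction "alters every relevant index only by $O(1)$" covers replacing $\lvert G:P_i(G)\rvert$ by $\lvert G':P_i(G)\cap G'\rvert$, but not replacing the filtration $P_i(G)\cap G'$ by $P_i(G')$ or by $\exp\lambda_i(\mathcal L)$; a comparison between these that is uniform in $i$ is precisely the nontrivial content of part (2), and it is missing from the proposal. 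The paper never changes the filtration: it takes $U=P_j(G)$, a term of the series itself, works with the $\Z_pG$-module series $\lambda_i(L)=L\aug_G^{\,i}$ carrying the action of the full group $G$ (the correspondence with $P_{i+j}(G)$ being imported from \cite{HeKlTh25}), and proves $p^iL\subseteq\lambda_i(L)\subseteq p^{i-c}L$ by refining $L$ along the isolators of $\gamma_s(G)\cap U$ so that $G$ acts trivially on the sections -- note that it is the nilpotency of $G$, acting on $L$, and not merely that of an open subgroup, which enters. To repair your argument you would either have to prove a uniform commensurability of the $P_i(G)$ with the $p$-power filtration of a fixed uniform open subgroup (essentially redoing this computation) or retain the $G$-action as the paper does.
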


Finally, referring again to \cite[Prob.~1.4]{KlThZu19},  it is known that for every $p$-adic
analytic pro\nobreakdash-$p$ group $G$ whose abelianisation has
torsion-free rank at least $2$, there is a filtration~$\mathcal{S}$
such that $\hspec^\mathcal{S}(G)$ is infinite;
see~\cite[Thm.~1.3]{KlThZu19}.  This leads to the
question~\cite[Prob.~1.6]{KlThZu19}: does there exist, for every
$p$-adic analytic pro\nobreakdash-$p$ group~$G$, a uniform bound
$b(G)$ for $\lvert \hspec^{\mathcal{S}}(G) \rvert$, as $\mathcal{S}$
runs through all filtration series of~$G$ with
$\lvert \hspec^{\mathcal{S}}(G) \rvert < \infty$.
 We show that the answer is
typically negative, in the strongest possible way.

\begin{theorem}\label{thm:uniform-bound}
  Let $G$ be a countably based pro\nobreakdash-$p$ group that has an
  open subgroup mapping surjectively onto $\Z_p \oplus \Z_p$.  Then
  for every finite subset $X \subseteq [0,1]$ with
  $\{0,1\} \subseteq X$ there exists a filtration series~$\mathcal{S}$
  such that $\hspec^{\mathcal{S}}(G) = X$.
\end{theorem}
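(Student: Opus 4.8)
The plan is to reduce to the case $G = \Z_p \oplus \Z_p$ and then to exhibit, by hand, a filtration series of $\Z_p \oplus \Z_p$ whose Hausdorff spectrum is exactly the prescribed set $X$. For the reduction, from the given open subgroup $U \le_{\mathrm{o}} G$ with $U \twoheadrightarrow \Z_p \oplus \Z_p$ one extracts a closed normal subgroup $N \trianglelefteq_{\mathrm{c}} G$ through which the relevant $\Z_p \oplus \Z_p$-section of $G$ becomes visible; fixing once and for all some filtration series $\mathcal{D} \colon D_i$ of the countably based group $G$, and given an arbitrary filtration series $\bar{\mathcal{S}} \colon A_a$ of $\Z_p \oplus \Z_p$, one pulls $\bar{\mathcal{S}}$ back to a descending chain $W^{(a)} \trianglelefteq_{\mathrm{o}} G$ of open normal subgroups with $\bigcap_a W^{(a)} = N$, and then defines $\mathcal{S} \colon G_i = D_i \cap W^{(\sigma(i))}$ for a non-decreasing surjection $\sigma \colon \N_0 \to \N_0$ chosen — after thinning $\mathcal{D}$ if need be — so slowly that $\log \lvert G : D_i \rvert = o\bigl(\log \lvert G : W^{(\sigma(i))} \rvert\bigr)$. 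Then $\mathcal{S}$ is a filtration series of $G$ (one has $\bigcap_i G_i = 1$ since already $\bigcap_i D_i = 1$), and comparing $\lvert H G_i : G_i \rvert$ and $\lvert G : G_i \rvert$ with the analogous indices relative to the $W^{(\sigma(i))}$ — the $D_i$-contribution being negligible in the limit — yields $\hspec^{\mathcal{S}}(G) = \hspec^{\bar{\mathcal{S}}}(\Z_p \oplus \Z_p)$. So it suffices to realise $X$ as the Hausdorff spectrum of a filtration series of $\Gamma := \Z_p \oplus \Z_p$.

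For the construction in $\Gamma$, recall that a closed subgroup of $\Gamma$ is either trivial, of finite index, or commensurable with a procyclic line $\ell_v = \overline{\langle v \rangle}$, and that for a filtration series $\mathcal{T} \colon \Gamma_i$ of $\Gamma$ one has $\hdim^{\mathcal{T}}_{\Gamma}(\ell_v) = \varliminf_i d_i(v)/n_i$, where $p^{n_i} = \lvert \Gamma : \Gamma_i \rvert$ and $p^{d_i(v)}$ is the order of $v \Gamma_i$ in $\Gamma/\Gamma_i$; hence $\hspec^{\mathcal{T}}(\Gamma) = \{0, 1\} \cup \{\hdim^{\mathcal{T}}_{\Gamma}(\ell_v) : v \in \Gamma \text{ primitive}\}$. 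Writing $X = \{x_0, x_1, \dots, x_k\}$ with $0 = x_0 < x_1 < \dots < x_k = 1$, I would build $\mathcal{T}$ as an infinite concatenation of ``blocks'', grouped into ``rounds'', each round running once through block-types $x_1, \dots, x_{k-1}$. A block of type $x_j$ starts from the current square lattice $p^{N} \Z_p^{2}$, passes to a sublattice that is thin along a prescribed line $\ell_w$ — calibrated so that the ratio $d_i(v)/n_i$ gets pushed down to (essentially) $x_j$ precisely for the lines $\ell_v$ lying in a small $p$-adic disk $D$ about the point $[w] \in \P^1(\Q_p)$, while staying $\ge x_j$ for every other line — and finally shrinks back to a square lattice. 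I would choose the lines $\ell_w$ used by the type-$x_j$ blocks within a round so that the corresponding disks $D$ cover $\P^1(\Q_p)$, and I would let the block lengths grow very rapidly from round to round. Then every line lies, for each $j$, inside infinitely many type-$x_j$ disks, so along suitable block-boundary subsequences its ratio is forced down to $x_j$, while the rapid growth makes the transient ratios produced inside a block (which may lie strictly between consecutive $x_j$'s) irrelevant for any $\varliminf$.

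It then remains to verify $\hspec^{\mathcal{T}}(\Gamma) = X$. The inclusion $X \subseteq \hspec^{\mathcal{T}}(\Gamma)$ is the easy half: $0$ and $1$ arise from the trivial subgroup and from $\Gamma$ itself, and, using the covering property, for each $j$ one exhibits a line that sits deep inside a type-$x_j$ disk at the end of a type-$x_j$ block in infinitely many rounds and therefore has Hausdorff dimension exactly $x_j$. The reverse inclusion is the heart of the matter: for an arbitrary primitive $v$ one must track $d_i(v)/n_i$ block by block and show that its $\varliminf$ cannot land strictly between two consecutive elements of $X$. I expect the main obstacle to be exactly the calibration that makes this work: blocks dedicated to different values interfere — a line that is ``generic'' for one block is ``special'' for another, so that its sequence of ratios zig-zags — and the thinning rates, the rotating frames and the block lengths must be tuned so that, simultaneously for every line, this zig-zag is squeezed onto $X$ without a spurious limit point. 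There is a further wrinkle for values $x_j > \tfrac{1}{2}$: a line can have Hausdorff dimension above $\tfrac{1}{2}$ only if its ratio stays above $\tfrac{1}{2}$ at essentially all scales, so the lines dedicated to such $x_j$ must be only moderately — rather than deeply — aligned with the relevant directions, which constrains the block design still further.
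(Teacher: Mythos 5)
Your reduction to $\Gamma = \Z_p \oplus \Z_p$ is essentially the transfer result that the paper simply cites (\cite[Prop.~2.1]{KlThZu19}), although you assert without justification that the pullbacks $W^{(a)}$ can be taken to be normal in $G$: pullbacks along $U \twoheadrightarrow \Z_p\oplus\Z_p$ are in general only normal in $U$, and replacing them by $G$-cores distorts logarithmic indices multiplicatively, which is exactly the kind of issue the cited proposition is there to handle. The genuine gap, however, is in the construction inside $\Gamma$, which is the heart of the matter, and your block design has two concrete defects. First, ending each block by shrinking back to a square lattice $p^N\Z_p^2$ forces every line $\ell_v$ to have ratio exactly $\tfrac{1}{2}$ at those indices, so $\varliminf \le \tfrac{1}{2}$ for every line and no element of $X$ lying in $(\tfrac{1}{2},1)$ can ever be attained; the ``wrinkle'' you flag about moderate alignment cannot repair this, since the value $\tfrac{1}{2}$ occurs at the square checkpoints regardless of how the thinning directions are chosen. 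Second, requiring the type-$x_j$ disks to cover $\P^1(\Q_p)$ for \emph{every} $j$ in every round means that every line is pushed to (essentially) $x_j$ infinitely often for every $j$; hence every line has $\varliminf \le x_1$, no subgroup realises $x_2,\dots,x_{k-1}$, and your ``easy half'' contradicts your own design --- the spectrum would collapse to at most three values. Beyond these two structural problems, you yourself identify the simultaneous calibration of interfering blocks as the ``main obstacle'' and leave it unresolved, so the key verification is missing.

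For comparison, the paper's construction does the opposite of covering: it dedicates to each value $\xi_k \in X$ a single fixed line $z_k = x + (1-p^k)^{-1}y$ and the congruence class of indices $i \equiv k \pmod{n}$, and takes lattices $L_{k+jn} = p^{2^{2^{k+jn-1}}}\Z_p \tilde x_{k,j} \oplus p^{2^{2^{k+jn}}}\Z_p y$ that are never square; the doubly exponential eccentricity makes the ratio of every ``generic'' line tend to $1$ (not $\tfrac{1}{2}$), which is what allows values above $\tfrac{1}{2}$ to survive. The calibration is encoded not in the aspect ratio but in the rate $p^{k+t_{k,j}}$ at which the thin direction $\tilde x_{k,j}$ converges $p$-adically to $z_k$: along the class $I_k$ the line $z_k$ has limit exactly $\xi_k$, while every other line eventually decouples and has limit $1$ along every class. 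The lower limit of any line is then the minimum of these class-wise limits, so the spectrum is exactly $X$, and there is no inter-block interference to control at all.
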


  \noindent \textit{Notation.}  We generally explain any new notation when needed,
  sometimes implicitly.  For
  convenience we collect some basic conventions here.

The set of positive integers is denoted by $\N$ and the set
  of non-negative integers by $\N_0$.  Throughout $p$ denotes a prime, and
  $\Z_p$ is the ring of $p$-adic integers (or simply its additive
  group).    The lower limit (limes inferior) of a sequence
  $(a_i)_{i \in \N}$ in $\R \cup \{ \pm \infty \}$ is denoted by $\varliminf a_i = \varliminf_{i \to \infty} a_i$.  Intervals of real
  numbers are written as $[a,b]$, $(a,b]$ et cetera.

  The group-theoretic notation is mostly standard and in line, for
  instance, with its use in~\cite{DDMS99}.  Tacitly, subgroups of
  profinite groups are generally understood to be closed subgroups.
  In some places, we emphasise that we are taking the topological
  closure of a set $X$ by writing~$\overline{X}$.

\medskip

\noindent \textit{Organisation.} In Section~\ref{sec:free}, we prove
Theorem~\ref{thm:direct-prod-of-free-pro-p} about full Hausdorff
spectra for finite direct products of free pro-$p$ groups. In
Section~\ref{sec:just-infinite-or-nilpotent}, we prove Theorem~\ref{thm:just-infinite-or-nilpotent}, for $p$-adic analytic
pro\nobreakdash-$p$ groups that are just infinite or nilpotent. Lastly, in Section~\ref{sec:unbounded} we
establish Theorem~\ref{thm:uniform-bound}, yielding arbitrarily large
finite Hausdorff spectra.

  
\section{Free
  pro-\texorpdfstring{$p$}{p} groups} \label{sec:free}

In this section we study the Hausdorff spectra
  $\hspec^{\mathcal{L}}(G)$ of finite direct products $G$ of free
  pro-$p$ groups, with respect to the lower $p$-series~$\mathcal{L}$,
  and prove Theorem~\ref{thm:direct-prod-of-free-pro-p}.

To this end, we carefully adapt
Lie-theoretic methods used by Garaialde Oca\~na, Garrido and
Klopsch~\cite[Sec.~4]{GaGaKl20} to deal with other filtration series,
in particular the Zassenhaus series.  In parts, we follow closely a
line of reasoning from a preprint version
(\texttt{arXiv:1901.03101v2}) of~\cite{GaGaKl20} that adjusts well to
our present setting.  We slightly adjust our notation so that it
matches better with the relevant parts of~\cite{GaGaKl20}.

It is convenient to establish first the following central case of
Theorem~\ref{thm:direct-prod-of-free-pro-p}.

\begin{theorem} \label{thm:free-pro-p} Let $F$ be a finitely generated
  non-abelian free pro\nobreakdash-$p$ group.  Then $F$ has full
  Hausdorff spectrum $\hspec^{\mathcal{L}}(F) = [0,1]$ with respect to
  the lower $p$-series $\mathcal{L}$.
\end{theorem}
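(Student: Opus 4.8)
The plan is to realize every value $\alpha \in [0,1]$ as the Hausdorff dimension $\hdim_F^{\mathcal{L}}(H)$ of some closed subgroup $H \le_\mathrm{c} F$, by passing to the associated graded Lie algebra. Write $\mathfrak{g} = \bigoplus_{i \ge 1} P_i(F)/P_{i+1}(F)$ for the graded $\F_p$-Lie algebra associated to the lower $p$-series; by the theorem of Jennings--Lazard--Quillen (for a free pro-$p$ group) this is the free restricted Lie algebra over $\F_p$ on $d = \dim_{\F_p} F/P_2(F)$ generators, and in particular its homogeneous component dimensions $c_i = \dim_{\F_p} P_i(F)/P_{i+1}(F)$ satisfy the Witt-type formula. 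The key observation, exactly as in \cite[Sec.~4]{GaGaKl20}, is that for a closed subgroup $H \le_\mathrm{c} F$ one has
\[
  \hdim_F^{\mathcal{L}}(H) = \varliminf_{n \to \infty} \frac{\sum_{i=1}^{n} \dim_{\F_p}\bigl( (H P_{i+1}(F) \cap P_i(F)) / P_{i+1}(F) \bigr)}{\sum_{i=1}^{n} c_i},
\]
so that the Hausdorff dimension is controlled by the ``sizes'' of the homogeneous pieces of the (closure of the) image of $H$ in $\mathfrak{g}$. Since $c_i$ grows exponentially (roughly like $d^i / i$), the denominator is dominated by its last term, which gives a great deal of flexibility.

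The concrete construction I would carry out: fix $\alpha \in (0,1)$ (the endpoints $0$ and $1$ being witnessed by the trivial subgroup and $F$ itself). Choose a free generating set $x_1, \ldots, x_d$ of $F$ and, inside the free restricted Lie algebra $\mathfrak{g}$, pick a ``controllable'' family of homogeneous elements — for instance basic commutators / a Hall--Witt basis — so that one can prescribe, layer by layer, a graded restricted Lie subalgebra $\mathfrak{h} \le \mathfrak{g}$ whose partial dimension sums $\sum_{i=1}^n \dim_{\F_p} \mathfrak{h}_i$ have prescribed lower density $\alpha$ relative to $\sum_{i=1}^n c_i$. Because successive $c_i$ differ multiplicatively by roughly $d$, one can switch between ``filling up a whole layer'' and ``leaving a layer essentially empty'' along a sparse sequence of indices, forcing the $\varliminf$ of the ratio down to any target $\alpha$ while keeping the $\varlimsup$ at $1$; alternatively one prescribes a steady proportion to get strong Hausdorff dimension $\alpha$ where desired. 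One then lifts $\mathfrak{h}$ to an honest closed subgroup $H \le_\mathrm{c} F$: take $H$ to be the closed subgroup topologically generated by lifts of a generating set of $\mathfrak{h}$, and verify that the image of $H$ in each quotient $P_i(F)/P_{i+1}(F)$ is exactly (the closure of) $\mathfrak{h}_i$. This last verification is where one must be careful that passing from the Lie algebra back to the group does not inflate the lower terms — this is handled by choosing the generators of $\mathfrak{h}$ to be a ``staircase'' adapted to the filtration, so that commutators and $p$-th powers of the chosen lifts land in strictly higher layers and cannot accidentally enlarge the partial sums; this is the technical heart, and it is precisely the part of \cite{GaGaKl20} (and its arXiv:1901.03101v2 predecessor) that we adapt.

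The main obstacle, then, is the group-to-Lie-algebra bookkeeping: a graded restricted subalgebra of $\mathfrak{g}$ need not be the graded algebra of any closed subgroup of $F$, so the construction must produce $\mathfrak{h}$ together with an explicit system of group-theoretic lifts whose lower $p$-series layers are under control. Concretely one shows that if $H = \overline{\langle y_1, y_2, \ldots \rangle}$ where each $y_k \in P_{e_k}(F) \setminus P_{e_k+1}(F)$ has leading term a chosen basic Lie element of degree $e_k$, and the $e_k$ are arranged so that the chosen leading terms are $\F_p$-linearly independent in each degree and generate $\mathfrak{h}$ as a restricted Lie algebra, then $\dim_{\F_p}(HP_{i+1}(F) \cap P_i(F))/P_{i+1}(F) = \dim_{\F_p} \mathfrak{h}_i$ for all $i$. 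Plugging this into the displayed formula for $\hdim_F^{\mathcal{L}}(H)$ and using the asymptotics $c_i \sim d^i/i$ then yields $\hdim_F^{\mathcal{L}}(H) = \alpha$, completing the proof that $\hspec^{\mathcal{L}}(F) = [0,1]$.
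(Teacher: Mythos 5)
Your overall strategy is the same as the paper's: prescribe a graded Lie subalgebra of prescribed density inside the graded object of the lower $p$-series, generate it by basic-commutator-type homogeneous elements, lift these to group commutators, and show that the graded object of the resulting closed subgroup is exactly the chosen subalgebra. However, your starting identification is wrong: the Jennings--Lazard--Quillen theorem describes the graded algebra of the \emph{Zassenhaus/dimension series} $\mathcal{D}$, where one indeed gets the free restricted Lie algebra. For the lower $p$-series $\mathcal{L}$ the associated object is Lazard's \emph{mixed} Lie algebra, which for $p>2$ is the free Lie ring over $\F_p[\pi]$ on $d$ generators, with $n$-th layer $\bigoplus_{m\le n}\pi^{n-m}L_m$; for example, for $d=2$ and $p$ odd one has $\dim_{\F_p} P_2(F)/P_3(F)=3$ (spanned by the classes of $x^p,y^p,[x,y]$), not $1$. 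So the correct ``controllable basis'' consists of generalised basic commutators $\pi^{k}\mathsf{c}$, the subalgebra you build must be $\pi$-stable (i.e.\ closed under the $p$-power operator), and there is a separate wrinkle at $p=2$. The asymptotics $c_n\sim \mathrm{const}\cdot d^n/n$ survive this correction, but all of your bookkeeping has to be redone in the $\F_p[\pi]$-structure.

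More seriously, the two steps that carry the actual weight are asserted rather than proved. First, to make the lower limit equal exactly $\alpha$ you must be able to drive the running ratio back \emph{down} after having added generators; exponential growth of $c_n$ alone does not give this, because the subalgebra generated by the finitely many elements chosen so far could a priori occupy a fixed positive proportion of every later layer, so ``leaving a layer essentially empty'' is not an available move. What makes the oscillation work is the theorem of Bahturin--Olshanskii that a finitely generated proper graded subalgebra of a free Lie algebra has (strong) density zero, together with a transfer of that statement to the $\F_p[\pi]$-setting; your sketch never invokes anything of this kind. Second, your lifting lemma --- leading terms linearly independent and generating $\mathfrak{h}$ implies that the graded object of $H$ equals $\mathfrak{h}$ --- is precisely the technical heart, and the reason you give (``commutators and $p$-th powers of the lifts land in strictly higher layers'') is not a proof: the danger is unexpected cancellation in a group word built from the lifts, which pushes that element deeper into the filtration and can produce a leading term of higher degree lying outside $\mathfrak{h}$. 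Excluding this requires the Hall collection process applied to generalised basic group commutators, with the $p$-power exponents contributing to the weight, and this adaptation from the Zassenhaus setting of Garaialde Oca\~na--Garrido--Klopsch to the lower $p$-series is genuinely part of the work, not a routine citation. As it stands, then, your proposal identifies the right route but rests on a misidentified graded algebra and leaves the two decisive steps unproved.
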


The proof of the theorem requires some preparation.  We consider the
free pro\nobreakdash-$p$ group $F$ on finitely many free generators
$x_1, \ldots, x_d$, for a given $d \ge 2$.  As described in
\cite[Sec.~II.1]{La65} and summarised in \cite[Sec.~5]{La85}, the
lower $p$-series $P_n(F)$, $n \in \N$, gives rise to a mixed
$\F_p$-Lie algebra $\boldsymbol{\Lambda} = (\Lambda, \mathsf{P})$.  In
concrete terms, this is the $\N$-graded $\F_p$-Lie algebra
\[
  \Lambda = \bigoplus_{n=1}^\infty \Lambda_n
\]
whose homogeneous components are the finite elementary abelian
$p$-groups $\Lambda_n = P_n(F)/P_{n+1}(F)$, for $n \in \N$, and whose
Lie bracket on homogeneous elements is induced by the group commutator
map $P_m(F) \times P_n(F) \to P_{m+n}(F)$, $(g,h) \mapsto [g,h]$;
additionally, $\Lambda$ comes equipped with a family of $\F_p$-linear
operators $\mathsf{P}_n \colon \Lambda_n \to \Lambda_{n+1}$, for
$n \in \N$, induced by the $p$-power map $P_n(F) \to P_{n+1}(F)$,
$g \mapsto g^p$ and thus satisfying a certain set of axioms that match
the more general concept of a mixed Lie algebra.

For $p>2$, the structure of $\boldsymbol{\Lambda}$ is rather easy to
describe.  Indeed, the images
$\mathsf{x}_1, \ldots, \mathsf{x}_d \in \Lambda_1$ of
$x_1, \ldots, x_d \in P_1(F)$ generate a free $\F_p$-Lie subalgebra
$L = \bigoplus_{n=1}^\infty L_n$ of~$\Lambda$, furnished with the
grading $L_n = L \cap \Lambda_n$ which reflects the degree of Lie
monomials.  The mixed Lie algebra $\boldsymbol{\Lambda}$ can be
understood as a free Lie ring on $\mathsf{x}_1, \ldots, \mathsf{x}_d$
over the polynomial ring $\F_p[\pi]$ in the sense that we may write
\begin{equation} \label{equ:lambda-n}
  \Lambda_n = \bigoplus_{m=1}^n \pi^{n-m} L_m, \qquad \text{for $n \in
    \N$,}
\end{equation}
and interpret the operators
$\mathsf{P}_n \colon \Lambda_n \to \Lambda_{n+1}$ as multiplication by
the transcendental element~$\pi$.  For $p=2$, there is a small wrinkle
for $\mathsf{P}_1 \colon \Lambda_1 \to \Lambda_2$, but the free
$\F_p$-Lie algebra $L$ still appears as a subalgebra of $\Lambda$ and
the mixed Lie subalgebra $\boldsymbol{\Lambda}^\circ$ based on
$\Lambda^\circ = [\Lambda,\Lambda] = \bigoplus_{n=2}^\infty \Lambda_n$
admits exactly the same description as for $p>2$.  It is convenient to
postpone taking this extra hurdle and for the time being we assume for
simplicity that $p > 2$; at the end of the proof of
Theorem~\ref{thm:free-pro-p} we explain how our reasoning easily
adapts to the remaining case $p=2$.

For all practical purposes, $\boldsymbol{\Lambda}$ and its $\F_p$-Lie
subalgebra $L$ can be realised as subalgebras of the free associative
algebra
$A = \F_p[\pi]\langle\!\langle \mathsf{x}_1, \ldots, \mathsf{x}_d
\rangle\!\rangle$ over the polynomial ring $\F_p[\pi]$, turned into an
$\F_p$-Lie algebra via
$[\mathsf{u},\mathsf{v}] = \mathsf{u}\mathsf{v} -
\mathsf{v}\mathsf{u}$ and equipped with a `uniform'
$\mathsf{P}$-operator $\mathsf{u} \mapsto \pi \mathsf{u}$.  Denoting
by $A_n$ the $\F_p$-subspace spanned by elements of the form
$\pi^{n-m} \mathsf{u}$, where $\mathsf{u}$ runs through all products
of length $m \in \{0,1, \ldots, n\}$ in
$\mathsf{x}_1, \ldots, \mathsf{x}_d$, we see that $A$ decomposes as
$A = \bigoplus_{n=0}^\infty A_n$ and becomes a graded algebra.  From
this perspective, the homogeneous components of $\Lambda$ arise simply
as the intersections $\Lambda_n = \Lambda \cap A_n$ with homogeneous
components of~$A$.

It is well known that each homogeneous component
$L_n = L \cap \Lambda_n$ of the free $\F_p$-Lie subalgebra $L$ on
$\mathsf{x}_1, \ldots, \mathsf{x}_d$ is spanned, as an $\F_p$-vector
space, by linearly independent \emph{basic Lie commutators} in
$\mathsf{x}_1, \ldots, \mathsf{x}_d$ of weight~$n$;
compare~\cite[Sec.~11]{Ha76} or \cite[Sec.~2]{Ba87}.  We recall and
extend the pertinent notions.

\begin{definition} \label{Defi:basic_comm_Lie} The \emph{weight} of a
  Lie commutator in $\mathsf{x}_1, \ldots, \mathsf{x}_d$ is defined
  inductively: each generator $\mathsf{x}_i$ has
  weight~$\wt(\mathsf{x}_i) = 1$, and, if $\mathsf{c}_1, \mathsf{c}_2$
  are Lie commutators in $\mathsf{x}_1, \ldots, \mathsf{x}_d$, then
  $[\mathsf{c}_1, \mathsf{c}_2]$ is a Lie commutator of weight
  $\wt([\mathsf{c}_1,\mathsf{c}_2]) = \wt(\mathsf{c}_1) +
  \wt(\mathsf{c}_2)$.  The \emph{basic commutators} are also defined
  inductively.  The basic commutators of weight $1$ are the
  generators~$\mathsf{x}_1, \ldots, \mathsf{x}_d$ in their natural
  order $\mathsf{x}_1 < \ldots < \mathsf{x}_d$.  For each $n \in \N$
  with $n \ge 2$, after defining basic commutators of weight less
  than~$n$, the basic commutators of weight~$n$ are those Lie
  commutators $[\mathsf{u},\mathsf{v}]$ such that:
  \begin{enumerate}
  \item $\mathsf{u},\mathsf{v}$ are basic commutators with
    $\wt(\mathsf{u})+\wt(\mathsf{v}) = n$,
  \item $\mathsf{u} > \mathsf{v}$; and if
    $\mathsf{u} = [\mathsf{y},\mathsf{z}]$ with basic commutators
    $\mathsf{y}, \mathsf{z}$, then $\mathsf{v} \geq \mathsf{z}$.
  \end{enumerate}
  Lastly, the total order is extended, subject to the condition that
  $\mathsf{u} < \mathsf{v}$ whenever
  $\wt(\mathsf{u}) < \wt(\mathsf{v})$ and arbitrarily among the new
  basic commutators of weight~$n$.

  We extend the terminology to elements of $\Lambda$, the free
  $\F_p[\pi]$-Lie ring on $\mathsf{x}_1, \ldots, \mathsf{x}_d$, as
  follows.  For $n \in \N$, a \emph{generalised basic commutator}
  $\tilde{\mathsf{c}}$ of weight $\wt(\tilde{\mathsf{c}}) = n$ in
  $\Lambda$ is a homogeneous element of the form
  $\tilde{\mathsf{c}} = \pi^{n-m} \mathsf{c} \in \Lambda_n$, where
  $m \in \{1,2,\ldots,n\}$ and $\mathsf{c}$ is a basic commutator of
  weight~$m$.  We refer to $\mathsf{c}$ as the \emph{core part} of the
  generalised basic commutator $\tilde{\mathsf{c}}$.
\end{definition}

For $m \in \N$, we consider the principal Lie ideals
\[
  I_m = \bigoplus_{n\geq m} \Lambda_n \trianglelefteq \Lambda \quad
  \text{and} \quad J_m = I_m \cap L = \bigoplus_{n\geq m} L_n
  \trianglelefteq L.
\]
For $\F_p$-Lie subalgebras $H$ of $\Lambda$ and $M$ of $L$ we define
the relative \emph{densities}
\begin{align*}
  \dens_\Lambda(H) %
  & = \varliminf_{n \to \infty} \frac{\dim_{\F_p} ((H +
    I_n)/I_n)}{\dim_{\F_p} (\Lambda/I_n)}, \\
  \dens_L(M) %
  & = \varliminf_{n \to \infty} \frac{\dim_{\F_p} ((M +
    J_n)/J_n)}{\dim_{\F_p} (L/J_n)};
\end{align*}
we say that a subalgebra has \emph{strong density} if its density is
given by a proper limit.

Witt's formula describes the dimensions $\dim_{\F_p}(L_n)$ of the
homogenous components of the free Lie algebra~$L$ and yields the
following asymptotic estimates for the dimensions of principal
quotients of $\Lambda$ and~$L$; compare~\cite[Lem.~4.3]{JZ08}.

\begin{lemma} \label{lem:Witt-for-mixed} In the set-up described above,
  including $d \ge 2$,
  \begin{align*}
    \dim_{\F_p}(\Lambda/I_{n+1}) %
    & = \frac{d^{n+2}}{n(d-1)^2} \big(1+o(1) \big) %
    & \text{as $n \to \infty$\phantom{.}} \\
    \intertext{and}
    \dim_{\F_p} (L/J_{n+1}) %
    & = \frac{d^{n+1}}{n(d-1)} \big(1+o(1) \big) %
    & \text{as $n \to \infty$.}
  \end{align*}
\end{lemma}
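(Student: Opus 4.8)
The plan is to deduce both estimates from Witt's formula for the homogeneous components of the free $\F_p$-Lie algebra $L$, combined with an elementary analysis of the resulting partial sums. Recall that, independently of the characteristic, $\dim_{\F_p}(L_m) = \frac{1}{m}\sum_{e\mid m}\mu(e)\,d^{m/e}$; isolating the leading divisor $e=1$ and bounding the remaining bounded-coefficient sum over divisors $e\ge 2$ (whose exponents $m/e$ are distinct integers $\le m/2$) shows that $\dim_{\F_p}(L_m) = d^m/m + O(d^{m/2})$. First I would rewrite the two left-hand sides in terms of the numbers $\dim_{\F_p}(L_m)$. From $J_{n+1} = \bigoplus_{m\ge n+1}L_m$ one reads off $\dim_{\F_p}(L/J_{n+1}) = \sum_{m=1}^n \dim_{\F_p}(L_m)$, while \eqref{equ:lambda-n} gives $\dim_{\F_p}(\Lambda_k) = \sum_{m=1}^k \dim_{\F_p}(L_m)$ for each $k$, and summing over $k=1,\ldots,n$ and swapping the order of summation yields $\dim_{\F_p}(\Lambda/I_{n+1}) = \sum_{m=1}^n (n-m+1)\,\dim_{\F_p}(L_m)$.

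Next I would feed in Witt's formula and substitute $m = n-k$. The main terms become $\sum_{m=1}^n d^m/m = \frac{d^n}{n}\sum_{k=0}^{n-1}\frac{n}{n-k}\,d^{-k}$ and $\sum_{m=1}^n \frac{(n-m+1)\,d^m}{m} = \frac{d^n}{n}\sum_{k=0}^{n-1}\frac{n(k+1)}{n-k}\,d^{-k}$, whereas the contributions of the $O(d^{m/2})$ error terms are $O(d^{n/2})$ in both cases, since $\sum_{k\ge 0} d^{-k/2}$ and $\sum_{k\ge 0}(k+1)\,d^{-k/2}$ converge for $d\ge 2$. The crux is the elementary limit that, for any polynomially bounded sequence $(a_k)_{k\ge 0}$, one has $\sum_{k=0}^{n-1}\frac{n}{n-k}\,a_k\,d^{-k} \to \sum_{k=0}^\infty a_k\,d^{-k}$ as $n\to\infty$. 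To see this, split the sum at $k = \lfloor \sqrt{n}\,\rfloor$: for $k \le \sqrt n$ one has $1 \le \frac{n}{n-k} \le (1-n^{-1/2})^{-1}$, so the corresponding partial sum is squeezed between $\sum_{k\le\sqrt n}a_k\,d^{-k}$ and $(1-n^{-1/2})^{-1}\sum_{k\le\sqrt n}a_k\,d^{-k}$, both tending to $\sum_{k=0}^\infty a_k\,d^{-k}$; for $k > \sqrt n$ the crude estimate $\frac{n}{n-k}\le n$, polynomial boundedness of $a_k$, the bound $d^{-k}\le d^{-\sqrt n}$ and at most $n$ summands force this tail to $0$. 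Taking $a_k = 1$ gives the limit $d/(d-1)$, and $a_k = k+1$ gives $\sum_{k\ge 0}(k+1)d^{-k} = d^2/(d-1)^2$.

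Putting the pieces together, $\dim_{\F_p}(L/J_{n+1}) = \frac{d^n}{n}\bigl(\frac{d}{d-1} + o(1)\bigr) + O(d^{n/2})$ and $\dim_{\F_p}(\Lambda/I_{n+1}) = \frac{d^n}{n}\bigl(\frac{d^2}{(d-1)^2} + o(1)\bigr) + O(d^{n/2})$; since $d^{n/2} = o(d^{n+1}/n)$, the error terms are absorbed, yielding exactly $\frac{d^{n+1}}{n(d-1)}(1+o(1))$ and $\frac{d^{n+2}}{n(d-1)^2}(1+o(1))$, respectively. The only point requiring care is the interchange of limit and summation in the two main sums, i.e.\ that the factors $\frac{n}{n-k}$, which blow up as $k \to n$, are rendered harmless by the geometric decay of $d^{-k}$; everything else is routine bookkeeping. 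In fact this asymptotic is essentially the content of \cite[Lem.~4.3]{JZ08}, which one could simply invoke.
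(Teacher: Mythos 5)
Your proposal is correct and follows what the paper intends: the paper states Lemma~\ref{lem:Witt-for-mixed} without proof, attributing it to Witt's formula together with \cite[Lem.~4.3]{JZ08}, and your argument is exactly that route, namely $\dim_{\F_p}(L/J_{n+1})=\sum_{m=1}^n\dim_{\F_p}(L_m)$ and, via \eqref{equ:lambda-n}, $\dim_{\F_p}(\Lambda/I_{n+1})=\sum_{m=1}^n(n-m+1)\dim_{\F_p}(L_m)$, followed by the standard asymptotic evaluation using $\dim_{\F_p}(L_m)=d^m/m+O(d^{m/2})$. The details you supply (the split at $k=\lfloor\sqrt{n}\rfloor$ to justify the limit interchange, and the absorption of the $O(d^{n/2})$ error terms) are sound routine bookkeeping, so no gaps remain.
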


From~\cite[Thm.~1]{BaOl15} we obtain the following useful consequence.

\begin{proposition} \label{prop:fg-graded-0} Let $L$ be a non-abelian
  free $\F_p$-Lie algebra on finitely many generators.  Then every
  finitely generated proper graded Lie subalgebra $M \lneqq L$ has
  strong density $\dens_L(M) = 0$.
\end{proposition}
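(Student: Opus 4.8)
The plan is to translate the statement into an estimate on the graded components of $M$, invoke the growth theorem of Bahturin and Olshanskii, and then upgrade ``density $0$'' to ``strong density $0$'' using the fact that the $\dim_{\F_p} L_n$ grow geometrically.

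First I would record that, since $M$ is graded, $M = \bigoplus_{n\ge 1} M_n$ with $M_n = M\cap L_n$, so $M + J_n = \bigl(\bigoplus_{m<n} M_m\bigr)\oplus J_n$ and hence
\[
  \dens_L(M) = \varliminf_{n\to\infty}\frac{\sum_{m=1}^{n-1}\dim_{\F_p}M_m}{\sum_{m=1}^{n-1}\dim_{\F_p}L_m}.
\]
From here it suffices to prove that $\dim_{\F_p}M_n/\dim_{\F_p}L_n\to 0$ as $n\to\infty$: writing $\varepsilon_m$ for this ratio and splitting the numerator at a threshold $K$, one gets $\sum_{m<n}\dim_{\F_p}M_m\le C_K+(\sup_{m>K}\varepsilon_m)\sum_{m<n}\dim_{\F_p}L_m$ with $C_K$ not depending on $n$; since $\sum_{m<n}\dim_{\F_p}L_m\to\infty$ by Lemma~\ref{lem:Witt-for-mixed}, letting $n\to\infty$ and then $K\to\infty$ shows that the $\varliminf$ above is in fact a proper limit and equals $0$, i.e.\ $M$ has strong density $0$. (The point is that $\dim_{\F_p}L_m$ grows geometrically, so the partial sums are dominated by their top terms.)

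Next I would check that a proper finitely generated graded subalgebra $M\lneqq L$ necessarily has infinite codimension in $L$, so that the growth theorem can be applied. By the Shirshov--Witt theorem $M$ is free, and being finitely generated and graded it is free on finitely many homogeneous generators, of degrees $e_1\le\cdots\le e_k$ say. Comparing Hilbert series via the Poincar\'e--Birkhoff--Witt theorem gives $\prod_{n\ge 1}(1-t^n)^{-\dim_{\F_p}M_n}=\bigl(1-\sum_j t^{e_j}\bigr)^{-1}$ and $\prod_{n\ge 1}(1-t^n)^{-\dim_{\F_p}L_n}=(1-dt)^{-1}$, where $d\ge 2$ is the rank of $L$. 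If $M$ had finite codimension, then $\dim_{\F_p}M_n=\dim_{\F_p}L_n$ for all $n$ beyond some bound $N$, and cancelling those factors yields the polynomial identity $(1-dt)\prod_{n\le N}(1-t^n)^{\dim M_n}=\bigl(1-\sum_j t^{e_j}\bigr)\prod_{n\le N}(1-t^n)^{\dim L_n}$. Comparing degrees and using $M_n\subseteq L_n$ forces $e_k-1=\sum_{n\le N}n\,(\dim M_n-\dim L_n)\le 0$, hence $e_k=1$; thus all free generators of $M$ lie in degree $1$, so $M=\langle M_1\rangle$, and evaluating the identity at $t=1/d$ then forces $\dim_{\F_p}M_1=d$, i.e.\ $M_1=L_1$ and $M=L$, contradicting properness.

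Finally, $M$ being a finitely generated subalgebra of infinite codimension in the non-abelian free $\F_p$-Lie algebra $L$, \cite[Thm.~1]{BaOl15} supplies the required growth comparison $\dim_{\F_p}M_n=o(\dim_{\F_p}L_n)$; together with the first step this yields $\dens_L(M)=0$ realised as a proper limit, which is exactly strong density $0$. The hard part is this last step: one must make sure that \cite[Thm.~1]{BaOl15} is being invoked in precisely the form that delivers the pointwise estimate $\dim_{\F_p}M_n/\dim_{\F_p}L_n\to 0$ (rather than a coarser growth-rate comparison), and that the ``proper and graded'' hypothesis genuinely excludes finite codimension; granting those, the asymptotic bookkeeping in the first two steps is routine given Lemma~\ref{lem:Witt-for-mixed}.
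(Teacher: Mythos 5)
Your proposal is correct and rests on exactly the same key ingredient as the paper, which states Proposition~\ref{prop:fg-graded-0} as a direct consequence of \cite[Thm.~1]{BaOl15} and records no further argument. The two bridging steps you add --- the Ces\`aro-type reduction from the pointwise ratios $\dim_{\F_p}M_n/\dim_{\F_p}L_n$ to the cumulative density, and the Hilbert-series/PBW argument (via Shirshov--Witt and homogeneous free generators) that a proper finitely generated graded subalgebra must have infinite codimension --- are both sound and merely make explicit what the paper leaves to the cited theorem, the latter step being needed only if one invokes that theorem in its infinite-codimension form rather than for finitely generated proper subalgebras directly.
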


In order to make use of this insight, we need to transfer the
conclusion to the $\F_p[\pi]$-Lie subalgebra $H$ of $\Lambda$
generated by an $\F_p$-Lie subalgebra $M$ of~$L$.

\begin{lemma} \label{lem:fg-graded-0-transfer}
  Let $L$ be the non-abelian free $\F_p$-Lie algebra on $d$
  generators, regarded as an $\F_p$-subalgebra of the free
  $\F_p[\pi]$-Lie algebra $\Lambda$ on the same $d$ generators, as in the set-up described above.  Let $M$ be a graded $\F_p$-Lie
  subalgebra of $L$ that has strong density $\dens_L(M) = 0$ in~$L$.
  Then the $\F_p[\pi]$-Lie algebra $H$ of $\Lambda$ generated by $M$
  has strong density $\dens_{\Lambda}(H) = 0$ in $\Lambda$.
\end{lemma}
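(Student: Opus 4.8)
The plan is to exploit that in the free $\F_p[\pi]$-Lie algebra $\Lambda$ the operator ``multiplication by~$\pi$'' is central for the Lie bracket, which forces the $\F_p[\pi]$-Lie subalgebra generated by $M$ to be merely the $\F_p[\pi]$-span of~$M$; after that the statement reduces to an elementary averaging lemma about partial sums of dimension sequences. Concretely, since $\pi$ lies in the coefficient ring of the ambient associative algebra $A = \F_p[\pi]\langle\!\langle \mathsf{x}_1, \ldots, \mathsf{x}_d \rangle\!\rangle$, one has $[\pi\mathsf{u},\mathsf{v}] = \pi[\mathsf{u},\mathsf{v}] = [\mathsf{u},\pi\mathsf{v}]$ for all $\mathsf{u},\mathsf{v} \in \Lambda$; hence the $\F_p[\pi]$-submodule generated by any $\F_p$-Lie subalgebra of $\Lambda$ is automatically closed under the bracket, and therefore $H = \F_p[\pi]\,M = \bigoplus_{k \ge 0} \pi^k M$. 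Writing $M = \bigoplus_{m \ge 1} M_m$ with $M_m = M \cap L_m$ (using that $M$ is graded), it follows from \eqref{equ:lambda-n} that $H$ is graded with homogeneous components $H_N = \bigoplus_{m=1}^{N} \pi^{\,N-m} M_m$, and since $\pi$ acts injectively on $\Lambda$ this yields $\dim_{\F_p} H_N = \sum_{m=1}^{N} \dim_{\F_p} M_m$.

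Next I would set $a_m = \dim_{\F_p} M_m$, $b_m = \dim_{\F_p} L_m$, and $A_N = \sum_{m=1}^{N} a_m$, $B_N = \sum_{m=1}^{N} b_m$, so that $\dim_{\F_p} H_N = A_N$ and $\dim_{\F_p} \Lambda_N = B_N$. Counting dimensions degree by degree, using that $H$, $L$ and $\Lambda$ are graded, gives $\dim_{\F_p}\bigl((M + J_{N+1})/J_{N+1}\bigr) = A_N$, $\dim_{\F_p}(L/J_{N+1}) = B_N$, as well as $\dim_{\F_p}\bigl((H + I_{N+1})/I_{N+1}\bigr) = \sum_{k=1}^{N} A_k$ and $\dim_{\F_p}(\Lambda/I_{N+1}) = \sum_{k=1}^{N} B_k$ (these last two are consistent with Lemma~\ref{lem:Witt-for-mixed}). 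Thus the hypothesis that $M$ has strong density $\dens_L(M) = 0$ says precisely that $\lim_{N \to \infty} A_N/B_N = 0$, while the assertion to be proved is that $\lim_{N \to \infty} \bigl(\sum_{k=1}^{N} A_k\bigr) \big/ \bigl(\sum_{k=1}^{N} B_k\bigr) = 0$.

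This implication I would prove directly, as a Toeplitz/Stolz--Ces\`aro type statement. By Witt's formula (Lemma~\ref{lem:Witt-for-mixed}, $d \ge 2$) all $b_m$ are positive, so $(B_k)$ is strictly increasing with $B_k \to \infty$, and hence $\sum_{k=1}^{N} B_k \to \infty$. Given $\varepsilon > 0$, choose $K$ with $A_k \le \varepsilon B_k$ for all $k \ge K$; then for $N \ge K$, splitting $\sum_{k=1}^{N} A_k = \sum_{k=1}^{K-1} A_k + \sum_{k=K}^{N} A_k \le \sum_{k=1}^{K-1} A_k + \varepsilon \sum_{k=1}^{N} B_k$ and dividing by $\sum_{k=1}^{N} B_k$ gives
\[
  \frac{\sum_{k=1}^{N} A_k}{\sum_{k=1}^{N} B_k} \;\le\; \frac{\sum_{k=1}^{K-1} A_k}{\sum_{k=1}^{N} B_k} + \varepsilon ,
\]
whose right-hand side tends to $\varepsilon$ as $N \to \infty$. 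As $\varepsilon$ was arbitrary, the limit on the left exists and equals~$0$, which is exactly $\dens_\Lambda(H) = 0$ with strong density.

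There is no serious obstacle here: the argument is essentially bookkeeping, and the only two points requiring care are the identification $H = \F_p[\pi]\,M$ via $\pi$-centrality (so that passing from $M$ to the generated $\F_p[\pi]$-Lie algebra introduces no new relations and no new dimensions beyond the $\pi$-shifts) and the correct translation of the two density quotients into the iterated partial sums $\sum_k A_k$ and $\sum_k B_k$, so that the elementary averaging lemma applies. The degenerate case $M = 0$ and the harmless index shift between $I_n$ and $I_{n+1}$ need no separate treatment.
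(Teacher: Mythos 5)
Your proof is correct, and it shares with the paper the key structural step — namely the identification $H = \bigoplus_{n} H_n$ with $H_n = \bigoplus_{m=1}^{n} \pi^{\,n-m} M_m$ via $\F_p[\pi]$-bilinearity of the bracket and \eqref{equ:lambda-n}, so that $\dim_{\F_p}\bigl((H+I_{N+1})/I_{N+1}\bigr)$ and $\dim_{\F_p}(\Lambda/I_{N+1})$ become the iterated partial sums $\sum_{k\le N} A_k$ and $\sum_{k\le N} B_k$ — but your analytic finish is genuinely different from the paper's. The paper fixes $\varepsilon$, chooses $c$ with $d^{-c} < \varepsilon/4$, splits off the ideal $\pi^c\Lambda + I_{n+1}$, and bounds $\Delta_n$ by $\bar\delta_c(n)$ plus the relative size of $\pi^c\Lambda$ modulo $I_{n+1}$; this "window" argument needs the exponential asymptotics of Lemma~\ref{lem:Witt-for-mixed} to make $\dim_{\F_p}(\Lambda/I_{n-c+1})/\dim_{\F_p}(\Lambda/I_{n+1})$ small. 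You instead reduce the whole statement to the elementary Stolz--Ces\`aro implication: if $A_N/B_N \to 0$ and $\sum_{k\le N} B_k \to \infty$, then $\bigl(\sum_{k\le N} A_k\bigr)/\bigl(\sum_{k\le N} B_k\bigr) \to 0$, which you prove by the standard tail-splitting estimate. Your route is cleaner and uses strictly weaker input (only positivity of the $\dim_{\F_p} L_m$, hence divergence of the denominators, rather than Witt's asymptotics), while the paper's cutoff argument has the mild advantage of quantifying how fast $\Delta_n$ decays in terms of the tail behaviour of $\delta_n$ and the exponential growth of the filtration; both correctly use the strong-density hypothesis (a genuine limit, not just a lower limit) and both deliver strong density $0$ for $H$. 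The two points you flag as needing care — the $\pi$-centrality identification $H = \F_p[\pi]M$ (which the paper simply declares "clearly") and the translation of the two density quotients into iterated partial sums — are exactly the right ones, and your treatment of them is sound.
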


\begin{proof}
  It is convenient to write, for $n \in \N$,
  \[
    \delta_n = \frac{\dim_{\F_p} ((M + J_{n+1})/J_{n+1})}{\dim_{\F_p}
      (L/J_{n+1})} \quad \text{and} \quad \Delta_n = \frac{\dim_{\F_p}
      ((H + I_{n+1})/I_{n+1})}{\dim_{\F_p} (\Lambda/I_{n+1})}.
  \]
  We are to deduce from $\lim_{n \to \infty} \delta_n = 0$ that
  $\lim_{n \to \infty} \Delta_n = 0$.

  Let $\varepsilon \in (0,1)$, and choose $c \in \N$ such that
  $d^{-c} < \varepsilon/4$.  By Lemma \ref{lem:Witt-for-mixed} there exists
  $n_0 \in \N$ with $n_0 >c$ such that for all $n \ge n_0$,
  \begin{equation} \label{equ:eps-over-2-number-1}
    \frac{\dim_{\F_p}(\Lambda/I_{n-c+1})}{\dim_{\F_p}(\Lambda/I_{n+1})}
    \le (1+\varepsilon) d^{-c} \le (1+\varepsilon) \varepsilon/4 < \varepsilon/2
  \end{equation}
  and, because $\delta_n \to 0$ as $n \to \infty$, we can also arrange that
  \begin{equation} \label{equ:eps-over-2-number-2} \bar \delta_c (n) =
    \max \{ \delta_m \mid n-c+1 \le m \le n \} \le \varepsilon/2 \qquad
    \text{for $n \ge n_0$.}
  \end{equation}
  
  We write $M = \bigoplus_{n=1}^\infty M_n$, where
  $M_n = M \cap L_n = M \cap \Lambda_n$ for $n \in \N$.  Clearly, this
  gives $ H = \bigoplus_{n=1}^\infty H_n$, with
  $H_n = \bigoplus_{m=1}^n \pi^{n-m} M_m$ for $n \in \N$.  We deduce
  that for $n \in \N$,
  \begin{align*}
    H + I_{n+1} %
    & = \left( \bigoplus_{k=0}^{n-1} \pi^k
      \bigoplus_{m=1}^{n-k} M_m \right) \oplus I_{n+1} \\
    \intertext{and likewise} %
    \Lambda %
    & = \left( \bigoplus_{k=0}^{n-1} \pi^k
      \bigoplus_{m=1}^{n-k} L_m \right) \oplus I_{n+1}.
  \end{align*}
  This yields
  \[
    \dim_{\F_p} \big((\pi^c \Lambda + I_{n+1}) / I_{n+1} \big) =
    \sum_{k=c}^{n-1} \sum_{m=1}^{n-k} \dim_{\F_p}(L_m) = \dim_{\F_p}
    (\Lambda / I_{n-c+1} )
  \]
  and
  \begin{multline*}
    \frac{\dim_{\F_p} \big( (H + \pi^c \Lambda + I_{n+1}) / (\pi^c
      \Lambda + I_{n+1}) \big)}{\dim_{\F_p} \big(\Lambda/(\pi^c
      \Lambda + I_{n+1}) \big)} = \frac{\sum_{k=0}^{c-1}
      \sum_{m=1}^{n-k} \dim_{\F_p}(M_m)}{\sum_{k=0}^{c-1}
      \sum_{m=1}^{n-k}
      \dim_{\F_p}(L_m)} \\
    \le \max_{0 \le k \le c-1} \frac{\sum_{m=1}^{n-k}
      \dim_{\F_p}(M_m)}{\sum_{m=1}^{n-k} \dim_{\F_p}(L_m)} = \bar
    \delta_c (n).
  \end{multline*}
  Using \eqref{equ:eps-over-2-number-1} and
  \eqref{equ:eps-over-2-number-2} we conclude that for
  $n \ge n_0$,
  \begin{multline*}
    \Delta_n \le \frac{\dim_{\F_p} \big( (H + \pi^c \Lambda +
      I_{n+1})/(\pi^c \Lambda + I_{n+1}) \big) + \dim_{\F_p} \big(
      (\pi^c \Lambda + I_{n+1})/ I_{n+1} \big)}{\dim_{\F_p}
      (\Lambda/I_{n+1})}
    \\
    \le \left( 1 -
      \frac{\dim_{\F_p}(\Lambda/I_{n-c+1})}{\dim_{\F_p}(\Lambda/I_{n+1})}
    \right) \bar \delta_c (n) +
    \frac{\dim_{\F_p}(\Lambda/I_{n-c+1})}{\dim_{\F_p}(\Lambda/I_{n+1})}
    < \varepsilon/2 + \varepsilon/2 = \varepsilon.
  \end{multline*}
  Thus $\Delta_n \to 0$ as $n \to \infty$.
\end{proof}

The next result is a straightforward adaptation of
\cite[Thm~4.10]{GaGaKl20}.

\begin{theorem}\label{thm:all-densities-free-lie}
  Let $\Lambda$ be a non-abelian free $\F_p[\pi]$-Lie algebra on
  finitely many generators.  Then there exists, for each
  $\alpha \in [0,1]$, an $\F_p[\pi]$-Lie subalgebra $H \leq \Lambda$
  that is contained in $\Lambda^\circ = [\Lambda,\Lambda]$, that can be  generated by generalised basic commutators and has density~$\dens_{\Lambda}(H) = \alpha$.
\end{theorem}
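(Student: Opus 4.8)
The plan is to realise each density $\alpha \in [0,1]$ as $\dens_\Lambda(H)$ by choosing $H$ to be the $\F_p[\pi]$-subalgebra generated by a carefully designed set of generalised basic commutators. The starting point is the classical dichotomy in the free $\F_p$-Lie algebra $L$: by Proposition~\ref{prop:fg-graded-0}, a \emph{proper} finitely generated graded subalgebra has density $0$, while of course $L$ itself has density $1$; via Lemma~\ref{lem:fg-graded-0-transfer} the same dichotomy transfers to the $\F_p[\pi]$-span in $\Lambda$. So the extreme values $\alpha = 0$ and $\alpha = 1$ are immediate (take $H = \langle \mathsf{x}_1, \mathsf{x}_2\rangle_{\F_p[\pi]}$ inside $\Lambda^\circ$ after passing to $[\Lambda,\Lambda]$ — note $[\Lambda,\Lambda]$ is again free-like on basic commutators of weight $\ge 2$, so one works with generators $\mathsf{c} = [\mathsf{x}_i,\mathsf{x}_j]$ there — for $\alpha=0$, and the whole of $\Lambda^\circ$ for $\alpha=1$). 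The real work is the intermediate range.

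For $\alpha \in (0,1)$ I would build $H$ as a ``block-interpolation'' between these two regimes, mimicking the construction in~\cite[Sec.~4]{GaGaKl20}. Fix a rapidly increasing sequence of weights $n_1 < n_2 < \cdots$, and fix a partition of $\N$ into consecutive intervals $B_k = (n_{k-1}, n_k]$. Inside each block one decides, according to whether the block is ``full'' or ``sparse'', to include into the generating set either \emph{all} basic commutators whose core part has weight in that block (contributing density $\approx 1$ from that block) or only a negligible finitely-generated supply (contributing density $\approx 0$). By Lemma~\ref{lem:Witt-for-mixed} the dimension $\dim_{\F_p}(\Lambda/I_{n+1})$ grows like $d^{n+2}/(n(d-1)^2)$, so the block $B_k$ accounts for a fraction $\approx (d^{n_k} - d^{n_{k-1}})/d^{n_k} \to 1$ of all of $\Lambda$ as $k \to \infty$, provided the $n_k$ grow fast enough; hence the density contributed by the first $k$ blocks is governed, up to lower-order terms, by the single most recent ``full'' block. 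The key technical point is to control $\dim_{\F_p}((H + I_{n+1})/I_{n+1})$ for $n$ ranging over a full block versus a sparse block: on a full block this ratio climbs toward the cumulative contribution, and on a sparse block it stays frozen while the denominator keeps growing, dragging the ratio down. By tuning the relative lengths (really, the relative \emph{weight-scales}, since growth is exponential) of full versus sparse blocks, the $\liminf$ of the ratio can be pinned to any prescribed $\alpha \in (0,1)$. Here I should warn that, unlike the $\alpha=0,1$ cases, $H$ will \emph{not} have strong density — the ratio oscillates — which is consistent with the theorem only asserting $\dens_\Lambda(H) = \alpha$ without a strong-density claim.

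Concretely, the main steps are: (i) reduce to $\Lambda^\circ = [\Lambda,\Lambda]$ and note it has the same free-$\F_p[\pi]$-Lie-algebra structure on basic commutators of weight $\ge 2$, so all constructions can be carried out there; (ii) handle $\alpha \in \{0,1\}$ via Proposition~\ref{prop:fg-graded-0} and Lemma~\ref{lem:fg-graded-0-transfer}; (iii) for $\alpha \in (0,1)$, set up the block structure and the generating set of generalised basic commutators, and write down exact formulas for $\dim_{\F_p}((H+I_{n+1})/I_{n+1})$ in terms of the $\dim_{\F_p}(L_m)$ summed over the ``full'' blocks below $n$ — this is where the basic-commutator basis is essential, since it lets one count dimensions blockwise; (iv) feed the Witt estimates of Lemma~\ref{lem:Witt-for-mixed} into these formulas and show that, with $n_k$ growing fast and the full/sparse pattern chosen so that the proportion of weight devoted to full blocks has the right asymptotic ``lower density'' $\alpha$, the $\liminf$ of the ratio equals $\alpha$ while the $\limsup$ equals (say) $1$.

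I expect the main obstacle to be step~(iv): one must simultaneously (a) make the $n_k$ sparse enough that each block dominates all previous ones in dimension, so that the running ratio on a full block really does approach the target from below and on a sparse block really does decay toward it, and (b) choose the full/sparse pattern so the $\liminf$ lands \emph{exactly} at $\alpha$, not merely in $[0,\alpha]$ — this requires the two competing block types to be interleaved at matched exponential scales, and getting a clean inequality in both directions is the delicate part. Fortunately the template from~\cite[Thm.~4.10]{GaGaKl20} already carries out an essentially identical computation for the Zassenhaus-series analogue, and the only new feature here is the extra polynomial factor $\pi$, which by Lemma~\ref{lem:fg-graded-0-transfer} (and by~\eqref{equ:lambda-n}) changes dimension counts only by a bounded multiplicative distortion that washes out in the $\liminf$; so the adaptation, while requiring care, should go through as in the cited reference.
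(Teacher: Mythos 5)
Your overall strategy---alternating phases in which you adjoin many generalised basic commutators (driving the partial ratio up) with frozen phases in which Proposition~\ref{prop:fg-graded-0} and Lemma~\ref{lem:fg-graded-0-transfer} force the ratio to decay---is exactly the mechanism of the paper's proof, and your treatment of $\alpha\in\{0,1\}$ is fine. The genuine gap is at the point you yourself flag in step~(iv)(b): with an all-or-nothing choice per block, tuning only the block lengths cannot pin the lower limit at exactly $\alpha$. By Lemma~\ref{lem:Witt-for-mixed} the denominator $\dim_{\F_p}(\Lambda/I_{n+1})$ grows by a factor of roughly $d$ for each unit increase of $n$, so during a frozen (sparse) phase the ratio $\dim_{\F_p}((H+I_{n+1})/I_{n+1})/\dim_{\F_p}(\Lambda/I_{n+1})$ may drop by a multiplicative factor close to $d$ in a single step. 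Hence, if you switch a full block on at the first weight where the ratio has fallen to $\le\alpha$, the trough value can land anywhere in roughly $[\alpha/d,\alpha]$, and since these troughs recur infinitely often the $\liminf$ is only guaranteed to lie in that interval, not to equal $\alpha$. The obstruction is the per-weight granularity of the construction, not the relative exponential scales of the blocks, so ``interleaving at matched scales'' does not repair it.

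The paper's proof resolves precisely this by refining the choice to the level of individual generalised basic commutators: writing $l^\circ(n)=\dim_{\F_p}\big((\Lambda^\circ+I_{n+1})/I_{n+1}\big)$, at each weight $k$ where the running ratio is still $\le\alpha$ one adjoins a suitable finite set of generalised basic commutators of weight exactly $k$ (each contributing exactly one $\F_p$-dimension in degree $k$), steering the cumulative dimension into the window $[\alpha\, l^\circ(k)-1,\ \alpha\, l^\circ(k)]$; when the ratio exceeds $\alpha$, the generating set is frozen, and since its core parts generate a finitely generated proper graded subalgebra of $L$, Proposition~\ref{prop:fg-graded-0} combined with Lemma~\ref{lem:fg-graded-0-transfer} guarantees the ratio eventually returns to $\le\alpha$, at which point the fine-tuning resumes. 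This yields the two-sided control (ratio $\ge \alpha-1/l^\circ(n)$ for all $n$, and $\le\alpha$ infinitely often) that forces $\dens_\Lambda(H)=\alpha$. Your plan becomes correct if you insert this partial-inclusion step at each sparse-to-full transition; without it, the construction only achieves a density somewhere below, and possibly strictly below, the target $\alpha$.
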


\begin{proof}
  As before, let $\mathsf{x}_1, \ldots, \mathsf{x}_d$, for $d \ge 2$,
  be free generators of $\Lambda$, and recall that the
  $\F_p$-subalgebra generated by $\mathsf{x}_1, \ldots, \mathsf{x}_d$
  is a free $\F_p$-Lie algebra $L = \bigoplus_{n=1}^\infty L_n$,
  furnished with the standard grading.  We write
  $L^\circ = [L,L] = \bigoplus_{n=2}^\infty L_n$ and observe that
  \[
    \Lambda^\circ = \bigoplus_{n=2}^\infty
    \Lambda_n^\circ, \qquad \text{with} \quad
    \Lambda_n^\circ = \bigoplus_{m=2}^n \pi^{n-m}
    L_m,
  \]
  is the $\F_p[\pi]$-Lie subalgebra generated by~$L^\circ$.
  Furthermore we have
  $\dim_{\F_p}(\Lambda_n) = d+\dim_{\F_p}(\Lambda_n^\circ)$ for each
  $n \in \N$.  This implies that the sequence
  \[
    l^\circ(n) = \dim_{\F_p} \big( (\Lambda^\circ +
    I_{n+1}) / I_{n+1} \big) = \sum\nolimits_{m=2}^n
    \dim_{\F_p}(\Lambda_m^\circ), \quad n \in \N,
  \]
  is strictly increasing, grows exponentially in~$n$ and satisfies
  \begin{equation} \label{equ:l-circ-approx}
    \nicefrac{\displaystyle l^\circ(n) \,}{\displaystyle \,  \dim_{\F_p} (
      \Lambda/I_{n+1})} \to 1 \qquad \text{as $n \to \infty$;}
  \end{equation}
  see Lemma~\ref{lem:Witt-for-mixed}.  We conclude that the subalgebra
  $\Lambda^\circ$ has strong density $1$ in~$\Lambda$.  Clearly, the
  trivial subalgebra $\{0\}$ has strong density $0$ in~$\Lambda$.

  Suppose now that $\alpha\in (0,1)$.  In order to produce an
  $\F_p[\pi]$-Lie subalgebra $H \leq \Lambda^\circ$ that can be
  generated by generalised basic commutators and has density
  $\dens_{\Lambda}(H) = \alpha$  it suffices, by~\eqref{equ:l-circ-approx}, to build an
  $\F_p[\pi]$-Lie subalgebra
  $H = \bigoplus_{m = 1}^\infty H_m \leq \Lambda^\circ$, where
  $H_m = H \cap \Lambda_m^\circ = H \cap \Lambda_m$ for $m \in \N$,
  which is generated by generalised basic commutators and such that
  \begin{enumerate}
  \item
    $\alpha- \nicefrac{1}{l^\circ(n)} \leq \tfrac{1}{l^\circ(n)}
    \sum_{m=1}^n \dim_{\F_p} (H_m)$ for all $n \in \N$ with
    $n \ge 2$; and
  \item
    $\frac{1}{l^\circ(n)} \sum_{m = 1}^n \dim_{\F_p} (H_m) \leq
    \alpha$ for infinitely many $n \in \N$.
  \end{enumerate}
  We construct such a Lie subalgebra $H$ inductively as the union
  $H = \bigcup_{k=2}^\infty H(k)$ of an ascending chain of
  $\F_p[\pi]$-Lie subalgebras $H(2) \subseteq H(3) \subseteq \ldots$,
  where each term $H(k)$ is generated by a finite set
  $\widetilde{Y}_k$ of generalised basic commutators of weight at most
  $k$ and
  $\widetilde{Y}_2 \subseteq \widetilde{Y}_3 \subseteq \ldots$.

  Let $k \in \N$ with $k \ge 2$.  For $k = 2$, observe that
  $l^\circ(2) = \dim_{\F_p}(\Lambda_2^\circ)$ and pick
  $a \in \{0,1, \ldots, l^\circ(2)-1\}$ such that
  $\alpha - \nicefrac{1}{l^\circ(2)} \le \nicefrac{a}{l^\circ(2)} \le
  \alpha$.  Let $H(2)$ denote the $\F_p[\pi]$-Lie subalgebra generated
  by a subset $\widetilde{Y}_2 \subseteq \Lambda_2^\circ = L_2$,
  consisting of an arbitrary choice of $a$ basic commutators of
  weight~$2$.

  Now suppose that $k \ge 3$.  Suppose further that we have already
  constructed an $\F_p[\pi]$-Lie subalgebra $H(k-1) \le \Lambda^\circ$
  which is generated by a finite set $\widetilde{Y}_{k-1}$ of
  generalised basic commutators of weight at most $k-1$, and that the
  inequalities in (i) hold for $H(k-1)$ in place of~$H$ and
  $1 \le n \le k-1$.  For $n \ge k$, consider
  \[
    \beta_n = \frac{\sum_{i=1}^n \dim_{\F_p} (H(k-1)_i)}{l^\circ(n)},
  \]
  where $H(k-1)_i = H(k-1) \cap \Lambda_i$ denotes the $i$th
  homogeneous component of the graded subalgebra $H(k-1)$.
  
  First suppose that $\beta_k \le \alpha$.  From
  \begin{multline*}
    \frac{\sum_{i=1}^{k-1} \dim_{\F_p} (H(k-1)_i) +
      \dim_{\F_p}(\Lambda_k^\circ)}{l^\circ(k)} \ge \left( \alpha -
      \tfrac{1}{l^\circ(k-1)} \right) \tfrac{l^\circ(k-1)}{l^\circ(k)} +
    \tfrac{l^\circ(k)- l^\circ(k-1)}{l^\circ(k)} \\ \ge \alpha-
    \tfrac{1}{l^\circ(k)},
  \end{multline*}
  we deduce that there is a finite set
  $\widetilde{Z}_k \subseteq \Lambda_k^\circ \smallsetminus H(k-1)_k$,
  consisting of generalised basic commutators of weight $k$, such that
  \[
    \alpha- \tfrac{1}{l^\circ(k)} \leq \frac{\sum_{i=1}^{k-1}
      \dim_{\F_p} \big( H(k-1)_i \big) + \dim_{\F_p} \big( H(k-1)_k
      \oplus \operatorname{\F_p-span} \widetilde{Z}_k \big)}{l^\circ(k)}\leq
    \alpha.
  \]
  Let $H(k)$ be the $\F_p[\pi]$-Lie subalgebra generated by
  $\widetilde{Y}_k = \widetilde{Y}_{k-1} \cup \widetilde{Z}_k$.
  Observe that the inequality in~(i), respectively~(ii), holds for
  $H(k)$ in place of $H$ and $1 \le n \le k$, respectively $n=k$.

  Now suppose that $\beta_k > \alpha$.  Let
  $Y_{k-1} \subseteq L^\circ$ be the finite set consisting of the core
  parts of the elements of~$\widetilde{Y}_{k-1}$, and let $M$ denote the
  $\F_p$-Lie subalgebra of $L$ generated by~$Y_{k-1}$.  By
  Proposition~\ref{prop:fg-graded-0}, the subalgebra $M$, has strong
  density $\dens_L(M) = 0$ in~$L$.  Clearly, $H(k-1)$ is contained in
  the $\F_p[\pi]$-Lie algebra generated by $Y_{k-1}$.  By
  Lemma~\ref{lem:fg-graded-0-transfer}, the latter has strong density
  $0$ in~$\Lambda$, hence so does~$H(k-1)$ in~$\Lambda^\circ$.  In
  particular, we find a minimal $k_0 \geq k+1$ such that
  $\beta_{k_0} \le \alpha$.  Putting $H(k_0-1) =H(k_0-2) = \ldots = H(k)= H(k-1)$ and
  $\widetilde{Y}_{k_0-1} =\widetilde{Y}_{k_0-2} = \ldots = \widetilde{Y}_k= \widetilde{Y}_{k-1}$, we return to the
  previous case for $k_0 > k$ in place of~$k$.
\end{proof}

Our proof of Theorem~\ref{thm:free-pro-p} uses, in addition to
Theorem~\ref{thm:all-densities-free-lie}, a partial correspondence
between selected Lie subalgebras of~$\Lambda$ and subgroups of the
free pro\nobreakdash-$p$ group~$F$, which gives rise to the mixed Lie
algebra $\boldsymbol{\Lambda}$ in the first place.  The correspondence
relies on a direct application of Philip Hall's collection process,
which we presently recall, following almost verbatim the account
provided in Section~4.3 of a preprint version
(\texttt{arXiv:1901.03101v2}) of~\cite{GaGaKl20}.

We describe the collection process in a free group $\Gamma$ on
finitely many generators $x_1, \ldots, x_d$; by taking homomorphic
images it can subsequently be applied to arbitrary $d$-generated
groups with a chosen set of $d$ generators.  The \emph{weight} of
group commutators in $x_1, \ldots, x_d$ and \emph{basic group
  commutators} in $x_1, \ldots, x_d$, including a total ordering, are
defined in complete analogy to Definition~\ref{Defi:basic_comm_Lie}.
At the heart of the Hall collection process for groups is the basic
identity
\[
uv = vu[u,v], \qquad \text{valid for all $u, v \in \Gamma$.}
\]

A finite product
\begin{equation}\label{eq:positive_word}
w = c_1 \cdots c_k c_{k+1} \cdots c_m
\end{equation}
of basic commutators $c_1, \ldots, c_m$ is \emph{ordered} if
$c_1 \le \ldots \le c_m$.  The \emph{collected part} of $w$ is the
longest prefix $c_1 \cdots c_k$ such that $c_1 \le \ldots \le c_k$ and
$c_k \le c_l$ for all $l \in \{ k+1, \ldots, m\}$.  The rest of the
product is called the \emph{uncollected part}.

The collection process produces ordered product expressions for $w$
modulo uncollected terms of higher weight, as follows.  Suppose that the leftmost occurrence of the smallest basic commutator in the uncollected part of the expression~\eqref{eq:positive_word} for~$w$ is
$c_l$, in position~$l$.  Rewriting $c_{l-1} c_l$, we obtain a new
expression for~$w$, with $m+1$ factors:
\[
w = c_1 \cdots c_k c_{k+1} \cdots
c_l c_{l-1} [c_{l-1}, c_l] c_{l+1} \cdots c_m,
\]
where the new factor $[c_{l-1}, c_l]$ is a basic commutator of higher
weight than $c_l$; in particular, $[c_{l-1}, c_l] > c_l$.  If $c_l$,
now occupying position $l-1$ in the product, is still in the
uncollected part, we repeat the above step until $c_l$ becomes the rightmost commutator in the collected part.  For any given
$r \in \N$, carrying out finitely many iterations of the above
procedure leads to an ordered product expression for~$w$, modulo
$\gamma_{r+1}(\Gamma)$, the $(r+1)$th term of the lower central series
of~$\Gamma$.

In order to apply the collection process to arbitrary elements of
$\Gamma$, i.e.\ to products in $x_1, \ldots, x_d$ and their inverses,
one would need to accommodate also for inverses $c_k^{\, -1}$ of basic
commutators as factors in our expressions.  That is, one would have to
consider collecting $u$ or $u^{-1}$ in expressions $v^{-1}u$,
$vu^{-1}$ or $v^{-1}u^{-1}$.  This can be done, as explained
in~\cite[\S 11]{Ha76}, but the group words we have to deal with are of
the simple form~\eqref{eq:positive_word}.

The collection process can be used to show that, for every element
$w \in \Gamma$ and every $r \in \N$, there is a unique
approximation
\[
w \equiv_{\gamma_{r+1}(\Gamma)} b_1^{\, j_1} \cdots b_t^{\, j_t}
\]
of $w$ as a product of powers of the basic commutators
$b_1, \ldots, b_t$ of weight at most $r$, appearing in order;
see~\cite[Thm.~11.2.4]{Ha76}.  The existence of the decomposition
follows from the collection process; the uniqueness requires further
considerations.

\begin{proof}[Proof of Theorem~\ref{thm:free-pro-p}]
  We may regard $F = \widehat{\Gamma}_p$ as the pro\nobreakdash-$p$ completion of
  the free group $\Gamma$ on $x_1, \ldots, x_d$, with $d \ge 2$.  The
  trivial subgroup $\{1\}$ and $F$ have Hausdorff dimensions $0$
  and~$1$.  Now suppose that $\alpha \in (0,1)$.  Using
  Theorem~\ref{thm:all-densities-free-lie}, we find an $\F_p[\pi]$-Lie
  subalgebra $H \le \Lambda$ that is contained in
  $\Lambda^\circ = [\Lambda,\Lambda]$, generated by an infinite set
  $\widetilde{Y}$ of generalised basic Lie commutators and satisfies
  $\dens_L(H) = \alpha$.  There is no harm in assuming that the set
  $\widetilde{Y}$ is minimal subject to generating~$H$.  In
  particular, this implies that no two elements of $\widetilde{Y}$
  have the same core part.  From
  $\widetilde{Y} \subseteq H \subseteq \Lambda^\circ$ we deduce that
  the core parts of all elements of $\widetilde{Y}$ lie in~$L^\circ$.

  There is a naive one-to-one correspondence between basic Lie
  commutators in $\mathsf{x}_1, \ldots, \mathsf{x}_d$ and basic group
  commutators in $x_1, \ldots, x_d$, which works by replacing
  occurrences of $\mathsf{x}_i$ by $x_i$ and exchanging Lie brackets
  with group commutators.  If
  $\tilde{\mathsf{c}} = \pi^k \mathsf{c} \in \Lambda$ is a generalised
  basic Lie commutator, with $k \in \N_0$ and core part $\mathsf{c}$,
  and if $c \in \Gamma$ is the basic group commutator derived from
  $\mathsf{c}$, we call $c^{p^k}$ the generalised basic group
  commutator derived from $\tilde{\mathsf{c}}$, with core part~$c$.
  Let $\widetilde{Y}_\mathrm{grp}$ denote the set of generalised basic
  group commutators derived from the elements of~$\widetilde{Y}$.
  Consider the subgroup
  $\Delta = \langle \widetilde{Y}_\mathrm{grp} \rangle \le \Gamma$;
  from $\widetilde{Y} \subseteq \Lambda^\circ$ we see that
  $\Delta \subseteq [\Gamma,\Gamma]$.  Let $\overline{\Delta}$ denote
  the topological closure of $\Delta$ in~$F$.

  The canonical embedding
  $\Gamma \hookrightarrow \widehat{\Gamma}_p = F$ induces canonical
  isomorphisms
  $P_n(\Gamma)/P_{n+1}(\Gamma) \cong P_n(F)/P_{n+1}(F) = \Lambda_n$
  for $n \in \N$.  We change perspective and regard
  $\boldsymbol{\Lambda} = (\Lambda, \mathsf{P})$ as the mixed Lie
  algebra associated to the discrete group~$\Gamma$, with respect to
  its lower $p$-series.  Let
  \[
    \varphi \colon \Gamma \to \Lambda = \bigoplus_{n=1}^\infty
    \Lambda_n, \quad \text{with components
      $\Lambda_n = P_n(\Gamma)/P_{n+1}(\Gamma)$,}
  \]
  denote the canonical map that sends $1$ to $1 \varphi = 0$ and each
  non-identity element
  $w \in P_n(\Gamma) \smallsetminus P_{n+1}(\Gamma)$ to
  $w \varphi = w P_{n+1}(\Gamma) \in \Lambda_n$, for $n \in \N$.  It
  is easily seen that
  $\Delta \varphi \subseteq [\Gamma,\Gamma] \varphi = \Lambda^\circ$
  is an $\F_p[\pi]$-Lie subalgebra and that
  \[
    \hdim_F^\mathcal{L}(\overline{\Delta}) = \dens_{\Lambda} (\Delta
    \varphi).
  \]
  Hence it suffices to show that $\Delta \varphi = H$; this duly
  implies $\alpha \in \hspec^{\mathcal{L}}(F)$.

  We observe that, for every basic group
  commutator~$c \in [\Gamma,\Gamma]$ and for $k \in \N_0$, the Lie
  element $(c^{p^k}) \varphi$ is equal to the generalised basic Lie
  commutator $\pi^k \mathsf{c} \in \Lambda_n$, where
  $\mathsf{c} = c^\mathrm{Lie}$ is the basic Lie commutator
  corresponding to~$c$ and $n = k+\wt(\mathsf{c})$.  In particular,
  the construction ensures that
  $\widetilde{Y}_\mathrm{grp} \varphi = \widetilde{Y}$.  Thus
  $\Delta \varphi$ contains $\widetilde{Y}$, and
  $H \subseteq \Delta \varphi$.  It remains to show that, conversely,
  $w \varphi \in H$ for every non-trivial element $w \in \Delta$.

  Let $w \in \Delta \smallsetminus \{1\}$, and choose $r \in \N$ such
  that $w \not \in P_{r+1}(\Gamma)$.  The element $w$ is a finite
  product of generalised basic group commutators from the generating
  set $\widetilde{Y}_\mathrm{grp}$ of~$\Delta$ and their inverses.
  However, working modulo the finite-index subgroup $P_{r+1}(\Gamma)$,
  we can avoid using inverses and assume that
  $w = \tilde c_1 \cdots \tilde c_m$ with $m \in \N$ and
  $\tilde c_1, \ldots, \tilde c_m \in \widetilde{Y}_\mathrm{grp}$, similar
  to~\eqref{eq:positive_word}.

  We now apply the collection process, treating the generalised basic
  commutators $\tilde c_i = c_i^{\, p^{k(i)}}$ as if they were equal
  to their core parts $c_i$ carrying the exponents $p^{k(i)}$ simply
  as decorations; we recall that, by the minimal choice of
  $\widetilde{Y}$, each exponent $p^{k(i)}$ is uniquely determined by
  the core part~$c_i$.  As
  $P_{r+1}(\Gamma) \supseteq \gamma_{r+1}(\Gamma)$, the collection
  process shows that, modulo $P_{r+1}(\Gamma)$, there is a finite
  product decomposition
  \begin{equation}\label{eqn:collected_w}
    w \equiv_{P_{r+1}(\Gamma)} \tilde{b}_1^{\, p^{e(1)} j_1} \, \tilde{b}_2^{\, p^{e(2)} j_2}
    \, \cdots \, \tilde{b}_t^{\, p^{e(t)} j_t},
  \end{equation}
  where (i) $\tilde{b}_1, \ldots, \tilde{b}_t$ are commutator
  expressions in $\tilde c_1, \ldots, \tilde c_m$ which upon replacing
  $\tilde c_1, \ldots, \tilde c_m$ by their core parts
  $c_1, \ldots, c_m$ yield basic group commutators $b_1, \ldots, b_t$
  that appear in order, and (ii)
  $e(1), \ldots, e(t), j_1, \ldots, j_t \in \N_0$ with
  $p \nmid j_1 \cdots j_t$.

  For $1 \le s \le t$, we define the weight of the expression
  $\tilde{b}_s$ to be
  \[
    \wt(\tilde{b}_s) = \wt(b_s) + A_s, \qquad \text{with} \quad A_s =
    \sum_{i=1}^m a_{s,i} \, k(i),
  \]
  where $\wt(b_s)$ is the weight of the corresponding `core'
  commutator and $a_{s,i}$ denotes the number of occurrences of $c_i$
  in~$b_s$.  Let $\mathsf{b}_s = b_s^\mathrm{Lie}$ denote the basic
  Lie commutator corresponding to~$b_s$.  Induction on $\wt(b_s)$,
  which measures the `length' of the commutator expression
  $\tilde{b}_s$, shows that
  \begin{equation} \label{equ:bs-phi}
    \tilde{b}_s \in P_{\wt(\tilde{b}_s)}(\Gamma) \smallsetminus
    P_{\wt(\tilde{b}_s) + 1}(\Gamma) \qquad \text{and} \qquad
    \tilde{b}_s \varphi = \pi^{A_s} \mathsf{b}_s.
  \end{equation}
  
  Let
  \[
    n = \min \big\{ e(s) + \wt(\tilde{b}_s) \mid 1 \le s \le t \big\}
  \]
  and set
  \[
    S = \big\{ s \mid 1 \le s \le t \text{ and } e(s) + \wt(\tilde{b}_s) = n
    \big\}.
  \]
  Then $w \in P_n(\Gamma) \smallsetminus P_{n+1}(\Gamma)$ and
  $w \varphi \in \Lambda_n$ is obtained from~\eqref{eqn:collected_w}
  by ignoring those factors $\tilde{b}_s^{\, p^{e(s)} j_s}$ for which
  $e(s) + \wt(\tilde{b}_s) > n$.  Indeed, \eqref{eqn:collected_w} and
  \eqref{equ:bs-phi} show that $w \varphi$ can be written as a
  non-trivial linear combination
  \[
    w \varphi = \sum_{s \in S} j_s \, \pi^{e(s)} \tilde{\mathsf{b}}_s,
    \qquad \text{with $\tilde{\mathsf{b}}_s = \pi^{A_s} \mathsf{b}_s$
      for $s \in S$},
  \]
  of the $\F_p$-linearly independent generalised basic Lie commutators
  $\pi^{e(s) + A_s} \mathsf{b}_s = \pi^{e(s)} \tilde{\mathsf{b}}_s$,
  $s\in S$, of weight~$n$.

  Each $\tilde{\mathsf{b}}_s$ is a Lie commutator of the generalised
  basic Lie commutators
  $\tilde{\mathsf{c}}_1 = \pi^{k(1)} \mathsf{c}_1, \ldots,
  \tilde{\mathsf{c}}_m = \pi^{k(m)} \mathsf{c}_m \in \widetilde{Y}$,
  in the same way as $\tilde{b}_s$ is a group commutator of
  $\tilde{c}_1, \ldots, \tilde{c}_m$.  As observed before, we have
  $\tilde{\mathsf{c}}_i = \tilde{c}_i \varphi \in \widetilde{Y}$ for
  $1 \le i \le m$.  Thus, $w \varphi$ lies indeed in~$H$.

  \smallskip

  Up to this point we assumed, for convenience, that $p > 2$.  This
  meant that we could regard the mixed Lie algebra
  $\boldsymbol{\Lambda}$ as a free $\F_p[\pi]$-Lie algebra which
  simplified our discussion.  As indicated earlier, for $p=2$, the
  mixed Lie algebra $\boldsymbol{\Lambda}$ has a slightly more
  complicated structure; but the mixed subalgebra
  $\boldsymbol{\Lambda}^\circ$ based on
  $\Lambda^\circ = [\Lambda,\Lambda]$ retains its properties.
  Moreover, it remains to settle the case that
  $\dim_{\F_p}(\Lambda_n) = d + \dim_{\F_p}(\Lambda_n^\circ)$ for each
  $n \in \N$.  By going through our proofs up to this point, it is
  easily seen that these ingredients are all we really need.  Thus we
  arrive at the same conclusions, also for $p=2$, notwithstanding the
  fact that $\boldsymbol{\Lambda}$ has a somewhat different structure.
\end{proof}

It remains to generalise our result to finite direct products of free
pro\nobreakdash-$p$ groups.  We use, without any substantial change, the line of
reasoning laid out in the proof of Theorem~1.3 in a preprint version
(\texttt{arXiv:1901.03101v2}) of~\cite{GaGaKl20}.

\begin{proof}[Proof of Theorem~\ref{thm:direct-prod-of-free-pro-p}]
  We consider the finite direct product
  $G = F_1 \times \ldots \times F_r$ of finitely generated free
  pro\nobreakdash-$p$ groups~$F_j$, where
  $d = \max \{d(F_j) \mid 1 \le j \le r \} \ge 2$.  Observe that the
  lower $p$-series $\mathcal{L} \colon P_n(G)$, $n \in \N$, of $G$
  decomposes as the product of the lower $p$-series
  $\mathcal{L}_j \colon P_n(F_j)$, $n \in \N$, of the direct
  factors~$F_j$, for $1 \le j \le r$.  Setting
  $t = \# \{ j \mid 1 \le j \le r, \,d(F_j) = d \}$, we may assume
  that $d(F_j) = d$ for $1 \le j \le t$ and $d(F_j) < d$ for
  $t+1 \le j \le r$.  Lemma~\ref{lem:Witt-for-mixed} shows that, for
  $t+1 \le j \le r$,
  \begin{equation} \label{equ:small-factors-irrelevant} \lim_{n \to
      \infty} \frac{\log_p \lvert F_j : P_n(F_j) \rvert}{\log_p \lvert
      F_1 : P_n(F_1) \rvert} = 0.
  \end{equation}

  Now let $\alpha \in [0,1]$ and choose $k \in \{1,\ldots,t\}$ such
  that $\nicefrac{(k-1)}{t} \le \alpha \le \nicefrac{k}{t}$.
  By Theorem~\ref{thm:free-pro-p}, we find a subgroup
  $H_1 \le_c F_1$ with
  $\hdim_{F_1}^{\mathcal{L}_1}(H_1) = t \alpha - (k-1)$.  Then
  \[
  H = H_1 \times F_2 \times \ldots \times F_k \times 1 \times \ldots
  \times 1 \le_\mathrm{c} G
  \]
  has Hausdorff dimension
  \begin{align}
    \hdim_G^{\mathcal{L}}(H) 
    & = \varliminf_{n \to \infty} \frac{\log_p\lvert H_1 P_n(F_1)
      : P_n(F_1) \rvert + \sum_{j=2}^k \log_p \lvert F_j :
      P_n(F_j) \rvert + 0}{\sum_{j=1}^t \log_p \lvert F_j :
      P_n(F_j) \rvert +
      \sum_{j=t+1}^r \log_p \lvert F_j : P_n(F_j) \rvert}
      \nonumber \\
    & = \varliminf_{n \to \infty} \frac{\log_p\lvert H_1 P_n(F_1)
      : P_n(F_1) \rvert + (k-1) \log_p \lvert
      F_1 : P_n(F_1) \rvert}{t \log_p \lvert F_1 : P_n(F_1) \rvert
      }  \label{equ:prod-calculation} \\
    & = \frac{\hdim_{F_1}^{\mathcal{L}_1}(H_1) + (k-1)}{t}
      \nonumber \\
    & = \alpha. \nonumber \qedhere
  \end{align}
\end{proof}


\section{Just infinite or nilpotent $p$-adic analytic  pro-\texorpdfstring{$p$}{p} groups} \label{sec:just-infinite-or-nilpotent}

We first recall the set-up and notation from \cite{HeKlTh25}, which  considers a series, analogous to the lower $p$-series of $p$-adic analytic pro\nobreakdash-$p$ groups,  for $\Z_p$-lattices furnished with a group action. 
 
 Let $G$ be a finitely generated pro-$p$ group and let $L$ be
a $\Z_p$-lattice equipped with a (continuous) right $G$-action. We denote the kernel of the natural epimorphism from $\Z_pG$ onto the finite field $\F_p$, that sends each group element to~$1$, by 
\[
  \aug_G = \sum_{g\in G} (g-1) \Z_pG +
  p\Z_pG \,\trianglelefteq\, \Z_pG.
\]
The descending series of open $\Z_pG$-submodules
\begin{equation*}
\lambda_i(L) = L . \aug_G^{\, i} \quad \text{for $i \in \N_0$,}
\end{equation*}
is called the \emph{lower $p$-series} of the
$\Z_pG$-module $L$; note however the notational shift in the index
in comparison to the lower $p$\nobreakdash-series of a group.  As $G$
is a pro\nobreakdash-$p$ group, it acts unipotently on every principal
congruence quotient $L/p^j L$, $j \in \N$, and this gives
that $\bigcap_{i \in \N_0} \lambda_i(L) = \{0\}$.

A \emph{filtration series} of $L$, regarded as a
$\Z_p$-lattice, is defined as a descending chain
$\mathcal{S} \colon L = L_0\ge L_1 \ge \ldots$ of open
$\Z_p$-sublattices $L_i \le_\mathrm{o} L$ such that
$\bigcap_{i \in \N_0} L_i = \{0\}$.  For $c \in \N_0$,
we say that two filtration series
$\mathcal{S} \colon L = L_0 \ge L_1 \ge \ldots$ and
$\mathcal{S}^* \colon L = L_0^* \ge L_1^* \ge \ldots$ are
\emph{$c$-equivalent} if for all $i \in \N$,
\[
  p^c L_i \subseteq L_i^* \quad \text{and} \quad p^c L_i^* \subseteq
  L_i.
\]
We say that $\mathcal{S}$ and $\mathcal{S}^*$ are \emph{equivalent} if
they are $c$-equivalent for some $c \in \N_0$.

\smallskip

For convenience, we state the following  result from \cite{HeKlTh25}, which is a variation of
\cite[Prop.~4.3]{Kl03}. First we recall that the \emph{rigidity} of~$L$ is
\[
  r(L) = \sup \{\ell_L(M) - u_L(M) \mid \text{$M$ is an open
    $\Z_pG$-submodule of $L$} \},
\]
where for an open $\Z_pG$-submodule
$M$ in~$L$, 
\[
  \ell_L(M) = \min \{k\in \N_0 \mid p^kL \subseteq M \} \;\text{and}\;
  u_L(M) = \max \{k\in \N_0 \mid M \subseteq p^kL\};
\]
cf.~\cite{Kl03}. For $L = \{0\}$ we set $r(L)= -\infty$.

\begin{proposition}
  \label{pro:rigid}\cite[Prop.~2.2]{HeKlTh25}
  Let $G$ be a finitely generated pro\nobreakdash-$p$ group, and let $L$ be a
  $\Z_p$-lattice equipped with a right $G$-action.  Then
  $r(L)$ is finite if and only if
  $\Q_p\otimes_{\Z_p} L$ is a simple
  $\Q_pG$-module.
\end{proposition}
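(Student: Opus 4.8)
The plan is to prove the two implications separately: that failure of simplicity of $V := \Q_p \otimes_{\Z_p} L$ forces $r(L) = \infty$, by an elementary purity argument, and that simplicity of $V$ forces $r(L)$ to be finite, by reducing to the lattice theory of maximal $\Z_p$-orders in simple $\Q_p$-algebras -- in effect following the line of \cite[Prop.~4.3]{Kl03}. We may assume $L \neq \{0\}$.

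Suppose first that $V$ is \emph{not} simple, so there is a $\Q_pG$-submodule $W$ with $\{0\} \neq W \lneqq V$, and put $N = L \cap W$. Since $W$ is a $\Q_p$-subspace, $L/N$ is torsion-free, hence a free $\Z_p$-module of rank $\dim_{\Q_p} V - \dim_{\Q_p} W \geq 1$; moreover $N \neq \{0\}$, and by purity together with Nakayama's lemma $N \not\subseteq pL$. For each $j \in \N$ the submodule $M_j = N + p^j L$ is an open $\Z_pG$-submodule of $L$ with $u_L(M_j) = 0$ (as $M_j \supseteq N \not\subseteq pL$), while passing to $L/N$ shows $\ell_L(M_j) = j$ because $L/N$ is a nonzero free $\Z_p$-module. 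Hence $\ell_L(M_j) - u_L(M_j) = j$ for all $j$, so $r(L) = \infty$.

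Now assume $V$ is simple. Let $B$ be the $\Q_p$-subalgebra of $\operatorname{End}_{\Q_p}(V)$ generated by the image of $G$ (equivalently, of $\Z_pG$). As $V$ is a faithful simple $B$-module, Schur's lemma and the double centraliser theorem give that $D = \operatorname{End}_B(V)$ is a finite-dimensional division $\Q_p$-algebra and that $B$ is a simple $\Q_p$-algebra. The image $\Lambda$ of $\Z_pG$ in $B$ is then a $\Z_p$-order in $B$, the $\Z_pG$-submodules of $L$ are precisely its $\Lambda$-submodules, and $L$ is a full $\Lambda$-lattice in $V$. Fix a maximal $\Z_p$-order $\Gamma$ of $B$ with $\Lambda \subseteq \Gamma$, and $s \in \N_0$ with $p^s \Gamma \subseteq \Lambda$. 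The structural input -- and the only place where simplicity of $V$ is genuinely used -- is that the full $\Gamma$-lattices in $V$ form a single homothety chain $\cdots \supseteq N_j \supseteq N_{j+1} \supseteq \cdots$ ($j \in \Z$) with $N_{j+e} = pN_j$ for some $e \geq 1$: via the Morita equivalence attached to $B \cong \Mat_m(D_0)$ and $\Gamma \cong \Mat_m(\mathcal{O})$ -- here $\mathcal{O}$ is the (noncommutative) valuation ring of a $p$-adic division algebra $D_0$ -- the module $V$ corresponds to the one-dimensional $D_0$-space, whose full $\mathcal{O}$-submodules are exactly the fractional ideals $\mathfrak{m}^j = \pi^j\mathcal{O}$, with $p\mathcal{O} = \mathfrak{m}^e$.

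With this in hand, let $M$ be an open $\Z_pG$-submodule of $L$; after replacing $M$ by $p^{-u_L(M)}M$ (which does not change $\ell_L(M) - u_L(M)$) we may assume $M \subseteq L$ and $M \not\subseteq pL$, so it suffices to bound $\ell_L(M)$ independently of $M$. Let $M\Gamma$ and $L\Gamma$ denote the $\Gamma$-lattices in $V$ generated by $M$ and $L$. From $p^s\Gamma \subseteq \Lambda$ one gets $p^s(L\Gamma) \subseteq L$ and $p^s(M\Gamma) \subseteq M$; from $M \subseteq L$ one gets $M\Gamma \subseteq L\Gamma$; and from $M \not\subseteq pL \supseteq p^{s+1}(L\Gamma)$ one gets $M\Gamma \not\subseteq p^{s+1}(L\Gamma)$. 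Writing $M\Gamma = N_a$ and $p^t(L\Gamma) = N_b$ with $t = u_{L\Gamma}(M\Gamma)$, the relations $N_a \subseteq N_b$, $N_a \not\subseteq pN_b = N_{b+e}$ and $t \leq s$ force $b \leq a \leq b+e-1$; hence $M\Gamma = N_a \supseteq N_{b+e} = p^{t+1}(L\Gamma) \supseteq p^{s+1}(L\Gamma) \supseteq p^{s+1}L$, and therefore $p^{2s+1}L \subseteq p^s(M\Gamma) \subseteq M$. Thus $\ell_L(M) \leq 2s+1$, giving $r(L) \leq 2s+1 < \infty$, as required. I expect the main obstacle to be the structural fact in the third paragraph: this is exactly where simplicity of $V$ is essential (when $V$ is not simple the corresponding Morita picture has at least two elementary divisors and the chain breaks, in agreement with the first part), and making it precise rests on the Morita reduction to the noncommutative discrete valuation ring $\mathcal{O}$ of a $p$-adic division algebra together with the classification of its one-sided ideals; the $p$-power bookkeeping in the last paragraph is then routine, modulo checking the harmless degenerate cases such as $L$ of $\Z_p$-rank~$1$, where $r(L) = 0$.
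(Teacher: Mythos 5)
Your argument is correct, but note that the paper itself gives no proof of this proposition: it is imported verbatim from \cite[Prop.~2.2]{HeKlTh25} and described there as a variation of \cite[Prop.~4.3]{Kl03}, so there is no in-paper proof to compare against. On its own terms your proposal is sound and essentially self-contained modulo standard facts. The easy direction (non-simple $\Rightarrow$ $r(L)=\infty$) via $N = L\cap W$, purity and Nakayama, with $M_j = N + p^jL$ giving $\ell_L(M_j)-u_L(M_j)=j$, is complete and correct. For the converse, your reduction to the image order $\Lambda$ of $\Z_pG$ in the simple algebra $B \cong \Mat_m(D_0)$, a maximal order $\Gamma \supseteq \Lambda$ with $p^s\Gamma \subseteq \Lambda$, and the fact that the full $\Gamma$-lattices in the simple module form a single chain $N_j$ with $pN_j = N_{j+e}$, is exactly the right structural input; it rests on two standard results over a complete discrete valuation ring (conjugacy of maximal orders, and the classification of one-sided ideals of the valuation ring of a $p$-adic division algebra as powers of its maximal ideal), both of which you cite correctly, and the final bookkeeping $p^{2s+1}L \subseteq M$ whenever $u_L(M)=0$ checks out, giving $r(L)\le 2s+1$. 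Two minor points worth tightening in a written version: make explicit that $\Lambda$ (the closure of the $\Z_p$-algebra generated by the image of $G$, i.e.\ the image of the completed group algebra) is automatically a finitely generated $\Z_p$-module, hence an order, because it sits inside $\operatorname{End}_{\Z_p}(L)$; and keep track of sides (right $G$-action, opposite algebras) in the Morita step, which is harmless precisely because all one-sided ideals of the valuation ring $\mathcal{O}$ are two-sided.
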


\smallskip

Now we turn to the special situation where the $p$-adic analytic group is just infinite.  This class of groups is of considerable interest and has been studied in some detail; for instance, see \cite{KLP97}.  We start with two somewhat more general auxiliary results. 

\begin{proposition}\label{pro:simple-L-P}
  Let $G$ be an infinite $p$-adic analytic pro\nobreakdash-$p$ group,
  and let $\mathcal{S}$ be a filtration series of~$G$.  Let $L$ be the
  $\Z_p$-Lie lattice associated to a uniformly powerful open normal
  subgroup $U \trianglelefteq_\mathrm{o} G$, equipped with the induced
  $G$-action, and suppose that $\Q_p\otimes_{\Z_p} L$ is a simple
  $\Q_pG$-module.
  
  Then every $H \le_\mathrm{c} G$ has strong Hausdorff dimension
 \[
   \hdim_G^{\mathcal{S}}(H) = \frac{\dim(H)}{\dim(G)}.
  \]
\end{proposition}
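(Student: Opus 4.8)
The plan is to reduce the computation of $\hdim_G^{\mathcal{S}}(H)$ to a statement about $\Z_p$-submodules of the Lie lattice $L$ and then exploit the simplicity of $\Q_p \otimes_{\Z_p} L$ through rigidity. First I would observe that it suffices to treat $H$ with $H \cap U$ of finite index in $H$ (automatic since $U$ is open) and, by splitting off the finite piece, reduce to understanding $\hdim_G^{\mathcal{S}}(H \cap U)$; since $U$ is open, $\hdim_G^{\mathcal{S}}(H) = \hdim_G^{\mathcal{S}}(H \cap U)$ because $|HG_i : G_i|$ and $|(H\cap U) G_i : G_i|$ differ by a bounded factor. So I may assume $H \le U$. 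Writing $M \le_\mathrm{c} L$ for the $\Z_p$-Lie sublattice corresponding to $H$ under $\log$, the quantity $\log_p |HG_i : G_i|$ becomes, up to an additive constant bounded independently of $i$, the quantity $\log_p |(M + L_i) : L_i|$ for a suitable filtration series $\mathcal{S}' \colon L = L_0 \ge L_1 \ge \cdots$ of the lattice $L$ obtained by intersecting $\log(U \cap G_i)$ with $L$ (one has to be mildly careful that $U \cap G_i$ need not be uniform, but passing to a uniform open subgroup and using the comparison of $p$-power index with additive index introduces only bounded error).

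Next I would bring in rigidity. By Proposition~\ref{pro:rigid}, the hypothesis that $\Q_p \otimes_{\Z_p} L$ is a simple $\Q_pG$-module is equivalent to $r(L) < \infty$. The point of finite rigidity is that for \emph{every} open $\Z_pG$-submodule $N$ of $L$ one has $p^{\ell}L \subseteq N \subseteq p^{u}L$ with $\ell - u \le r(L)$, so $N$ is squeezed between two scalar multiples of $L$ differing by a bounded power of $p$. I would apply this to the submodules $N_i = L_i \cap (\text{the largest } \Z_pG\text{-submodule contained in } L_i)$ — actually more directly, since $\mathcal{S}'$ need not consist of $G$-submodules, I would instead argue that the filtration series $\mathcal{S}'$, although only a filtration series of $L$ as a $\Z_p$-lattice, still forces the relevant $\liminf$ to be a genuine limit equal to the "volume ratio" $\dim_{\Z_p}(M)/\dim_{\Z_p}(L)$. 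The mechanism: for a free $\Z_p$-module, $\log_p |(M + L_i):L_i| = \log_p|L : L_i| - \log_p |L : M + L_i|$, and $\log_p|L:M+L_i|$ is squeezed between $\log_p |L : M + p^{a_i}L|$ and $\log_p|L : M + p^{b_i}L|$ where $p^{a_i}L \subseteq L_i \subseteq p^{b_i}L$ and $a_i - b_i \le r(L)$; since $\log_p|L : M + p^kL| = k\,\mathrm{rk}(L/M) + O(1)$ as $k \to \infty$ (with $\mathrm{rk}$ the $\Z_p$-rank, and the $O(1)$ the length of the torsion of $L/M$), one gets $\log_p|(M+L_i):L_i| = \mathrm{rk}(M)\,b_i + O(1)$ while $\log_p|L:L_i| = \mathrm{rk}(L)\, b_i + O(1)$, and the ratio converges to $\mathrm{rk}(M)/\mathrm{rk}(L) = \dim(H)/\dim(G)$. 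Crucially the $\liminf$ is a true limit, giving strong Hausdorff dimension.

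I would then package the dimension bookkeeping: $\dim(H) = \mathrm{rk}_{\Z_p}(M) = \mathrm{rk}_{\Z_p}(H \cap U)$ since $H$ is $p$-adic analytic and $\log$ identifies $H \cap U$ (a uniform open subgroup of $H$, after shrinking) with $M$; likewise $\dim(G) = \mathrm{rk}_{\Z_p}(L)$. Combining with the reduction $\hdim_G^{\mathcal{S}}(H) = \hdim_G^{\mathcal{S}}(H \cap U)$ from the first step completes the proof.

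The main obstacle I anticipate is the bookkeeping in the first two steps: the filtration series $\mathcal{S}$ of $G$ does \emph{not} restrict to a series of $\Z_pG$-submodules of $L$, nor even to a series of sublattices of $L$ in an obvious canonical way, and the subgroups $U \cap G_i$ need not be uniform, so one cannot naively apply $\log$. The way through is to note that all one needs is to control indices up to a bounded multiplicative constant: one may freely replace $G_i$ by $(G_i \cap U)^{p^N}$ or by the uniform part of $G_i \cap U$, replace $\mathcal{S}$ by an equivalent series, and use that $\log$ is a bi-Lipschitz-type correspondence between indices of open subgroups of a uniform group and indices of sublattices of its Lie lattice. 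The finite-rigidity squeeze from Proposition~\ref{pro:rigid} is exactly what converts "the series $L_i$ is squeezed between two scalar multiples of $L$ whose ratio is bounded" into a statement with a genuine limit; without simplicity of $\Q_p \otimes_{\Z_p} L$ the squeeze fails and only a $\liminf$ survives. So the architecture is: (i) reduce to $H \le U$ and to the Lie lattice; (ii) reduce to a filtration series of $L$ and compare indices via $\log$ up to $O(1)$; (iii) invoke $r(L) < \infty$ via Proposition~\ref{pro:rigid} to squeeze $L_i$ between $p^{a_i}L$ and $p^{b_i}L$ with $a_i - b_i$ bounded; (iv) compute the limiting ratio as $\dim(H)/\dim(G)$, noting it is a genuine limit.
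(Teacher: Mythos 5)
Your architecture coincides with the paper's (reduce to $H\cap U$, pass to the Lie lattice, use Proposition~\ref{pro:rigid} to compare the induced lattice filtration with the scalar filtration $p^{n}L$, then compute the ratio), but there is a genuine gap at the decisive step. The rigidity bound of Proposition~\ref{pro:rigid} controls $\ell_L(N)-u_L(N)$ only for open $\Z_pG$-\emph{submodules} $N$ of $L$; for arbitrary open $\Z_p$-sublattices no such bound exists (take $L_i=p^{2i}\Z_p e_1\oplus p^{i}\Z_p e_2\oplus\cdots$), and for such skewed lattice filtrations the lower limit need not equal the rank ratio at all -- this is exactly the phenomenon exploited in Section~\ref{sec:unbounded}. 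You explicitly state that your series $\mathcal{S}'$ ``need not consist of $G$-submodules'' and then nevertheless invoke the squeeze $p^{a_i}L\subseteq L_i\subseteq p^{b_i}L$ with $a_i-b_i\le r(L)$; as justified in your write-up this step would fail, and the simplicity hypothesis is never actually brought to bear. The missing observation is that the $L_i$ \emph{are} $\Z_pG$-submodules: by definition a filtration series of $G$ consists of open \emph{normal} subgroups $G_i\trianglelefteq_\mathrm{o}G$, so $U_i=G_i\cap U$ is normal in $G$, and since the $G$-action on $L$ is induced by conjugation, the corresponding sublattice $L_i$ is $G$-invariant. Only with this in hand does Proposition~\ref{pro:rigid} (equivalently, the simplicity of $\Q_p\otimes_{\Z_p}L$) give the uniform squeeze, which is precisely how the paper proceeds: $\mathcal{S}\vert_L$ is equivalent to a series of the form $L\ge p^{n_1}L\ge p^{n_2}L\ge\cdots$.

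A secondary bookkeeping point: you propose to repair the possible non-uniformity of $U\cap G_i$ by passing to a uniform open subgroup ``with bounded error'', but the index of such a subgroup inside $U\cap G_i$ is not obviously bounded independently of $i$, so the $O(1)$ claim needs an argument. The paper sidesteps this by shrinking $U$ once and for all so that every open normal subgroup of $G$ contained in $U$ is powerfully embedded in $U$ (via \cite{DDMS99}); then each $U_i=G_i\cap U$ is itself uniformly powerful and corresponds directly to the sublattice $L_i$, with no per-$i$ adjustment. With these two repairs -- $G$-invariance of the $L_i$ and the up-front choice of $U$ -- your final limit computation agrees with the paper's, which delegates the remaining comparisons to \cite{KlTh19}, to Equation~(3.3) in the proof of Theorem~1.5 of \cite{HeKlTh25}, and to \cite{KlThZu19}.
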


\begin{proof}
  Let $H \le_\mathrm{c} G$.  Without loss of generality, we may suppose
  that every open normal subgroup $N \trianglelefteq_\mathrm{o} G$
  with $N \subseteq U$ is powerfully embedded in~$U$; see
  \cite[Prop.~3.9]{DDMS99}.  Then
  $\mathcal{S} \vert_U \colon U_i = G_i \cap U, \, i \in \N_0$,
  is a filtration series of $U$, consisting of uniformly powerful
  subgroups, and
  \begin{equation}\label{equ:hdim-in-G=hdim-in-U}
    \hdim^{\mathcal{S}}_G(H) = \hdim^{\mathcal{S}}_G(H\cap U) =
    \hdim^{\mathcal{S}\vert_U}_U(H\cap U);
  \end{equation}
  compare with~\cite[Lem.~2.1]{KlTh19}.  

  The series $\mathcal{S} \vert_U$ translates over to the Lie lattice side to a filtration series
  $\mathcal{S} \vert_L \colon L_i, \, i \in \N_0$, of $L$.     By Proposition~\ref{pro:rigid} the
  $\Z_pG$-module $L$ has finite rigidity.  Consequently,
  $\mathcal{S} \vert_L$ is equivalent to a filtration series of the
  form
  $\mathcal{S} \vert_L^* \colon L \ge p^{n_1}L \ge p^{n_2}L \ge
  \ldots$, for a suitable non-decreasing sequence of non-negative
  integers $n_1, n_2, \ldots$ tending to infinity.

  Pick a uniformly powerful open subgroup $W \le_\mathrm{o} H \cap U$,
  and let $M \le L$ denote the corresponding Lie sublattice.   We deduce that
  \begin{equation} \label{equ:hdim-in-U=hdim-in-L}
    \hdim^{\mathcal{S}\vert_U}_U(H\cap U) =
    \hdim^{\mathcal{S}\vert_U}_U(W) = \hdim^{\mathcal{S}\vert_L}_L(M)
    = \hdim^{\mathcal{S}\vert_L^*}_L(M);
  \end{equation}
 indeed, in the first equality we use \cite[Lem.~2.1]{KlTh19}, in the second equality we are using Equation (3.3) of the proof of \cite[Thm.~1.5]{HeKlTh25}, while in the third equality we use \cite[Lem.~2.2]{KlThZu19}.
  A direct inspection shows that the additive subgroup $M$ of $L$ has
  strong Hausdorff dimension
  \[
    \hdim^{\mathcal{S}\vert_L^*}_L(M) = \frac{\dim(M)}{\dim(L)} =
    \frac{\dim(H)}{\dim(G)}.
  \]
  Tracing our way back through \eqref{equ:hdim-in-G=hdim-in-U} and
  \eqref{equ:hdim-in-U=hdim-in-L} concludes the proof.
\end{proof}

For the next result, we recall from ~\cite[Sec.~4 and~9]{DDMS99} that  there is an explicit isomorphism of categories translating between
  uniformly powerful pro\nobreakdash-$p$ groups and powerful
  $\Z_p$-Lie lattices.  So for a  uniformly powerful pro\nobreakdash-$p$ group $U$,  the underlying set of  $U$ can be equipped in a canonical way
  with the structure of a $\Z_p$-Lie lattice $L$ carrying the same
  topology.  In particular,  exponentiation in~$U$  corresponds to scalar multiplication in $L$ and  the conjugation
  action of~$G$ on $U$ translates to a continuous $\Z_p$-linear action
  of $G$ on~$L$.  
\begin{lemma} \label{lem:a-b-c-equiv} Let $U$ be a uniformly powerful
  pro\nobreakdash-$p$ group, and let $L$ be the associated $\Z_p$-Lie
  lattice.  Then the following are equivalent:
  \begin{enumerate}
  \item[(a)] $U$ is insoluble and just infinite.
  \item[(b)] $\Q_p \otimes_{\Z_p} L$ is a simple $\Q_p$-Lie
    algebra.
  \item[(c)] $\Q_p \otimes_{\Z_p} L$ is a simple 
    $\Q_p U$-module with non-trivial $U$-action.
  \end{enumerate}
\end{lemma}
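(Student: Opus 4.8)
The plan is to transfer everything to the Lie side. Write $\mathfrak{g} = \Q_p \otimes_{\Z_p} L$ for the associated finite-dimensional $\Q_p$-Lie algebra, with the $U$-action induced by the conjugation action of $U$ on~$L$, and prove the two equivalences (b)$\Leftrightarrow$(c) and (a)$\Leftrightarrow$(b) via the dictionary between the uniform group $U$ and its powerful Lie lattice $L$ recalled just before the statement.

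For (b)$\Leftrightarrow$(c), the key observation is that, since $L$ is powerful, the series $\sum_{k\ge 0}\tfrac{1}{k!}\operatorname{ad}(x)^k$ converges $p$-adically for every $x\in L$ and realises the automorphism $\operatorname{Ad}(\exp x)$ of $\mathfrak{g}$, while $\exp(tx)\in U$ for all $t\in\Z_p$ and $x\in L$. Hence a $\Q_p$-subspace $V\le\mathfrak{g}$ is a $\Q_pU$-submodule if and only if $\operatorname{ad}(x)V\subseteq V$ for all $x\in L$, and, as $L$ spans $\mathfrak{g}$ over $\Q_p$, this is exactly the condition that $V$ be a Lie ideal of $\mathfrak{g}$; likewise $U$ acts trivially on $\mathfrak{g}$ if and only if $\operatorname{ad}(x)=0$ for all $x\in L$, i.e.\ $\mathfrak{g}$ is abelian. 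Thus condition~(c) says precisely that $\mathfrak{g}\ne 0$ is non-abelian and has no proper non-zero ideal, which is the definition of a simple $\Q_p$-Lie algebra, so that (b)$\Leftrightarrow$(c).

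For (a)$\Leftrightarrow$(b), I would first prove the intermediate statement that $U$ is just infinite if and only if $\mathfrak{g}$ has no proper non-zero ideal. If $I$ is a proper non-zero ideal of $\mathfrak{g}$, then $I\cap L$ is an isolated $\Z_p$-submodule of $L$ (from $pw\in I\cap L$ with $w\in L$ one gets $w=p^{-1}(pw)\in I$), so $[I\cap L,I\cap L]\subseteq [L,L]\cap I\subseteq pL\cap I=p(I\cap L)$ shows that $I\cap L$ is a powerful Lie sublattice, and it is an ideal of $L$ because $[L,I\cap L]\subseteq [L,L]\cap I\subseteq I\cap L$. Under the equivalence of categories it corresponds to a uniform $N\trianglelefteq_{\mathrm c}U$ with $1\ne N$ and $\dim N=\dim_{\Q_p}I<\dim_{\Q_p}\mathfrak{g}=\dim U$, so that $N$ has infinite index and $U$ is not just infinite. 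Conversely, given $1\ne N\trianglelefteq_{\mathrm c}U$, one replaces $N$ by an open uniform subgroup $N_0\le_{\mathrm o}N$ and passes to the isolator $\widetilde N=\{u\in U: u^{p^k}\in N_0\ \text{for some}\ k\in\N\}$, which coincides with the isolator of $N$ in $U$; by the standard theory of isolators in uniform groups (see~\cite{DDMS99}), $\widetilde N$ is a uniform normal subgroup of $U$ in which $N$ has finite index, and $\log\widetilde N$ is an isolated ideal of $L$, so $\Q_p\otimes_{\Z_p}\log\widetilde N$ is a non-zero ideal of $\mathfrak{g}$. If $\mathfrak{g}$ has no proper non-zero ideal, this forces $\widetilde N$, and hence $N$, to be open in $U$, so $U$ is just infinite. (For $p=2$ one replaces $p$ by $4$ in the powerfulness estimates.)

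Finally, I would combine this with the standard fact that $U$ is soluble if and only if $L$ is soluble if and only if $\mathfrak{g}$ is soluble, and with the elementary observation that a non-zero Lie algebra without proper non-zero ideals is either one-dimensional abelian or simple. Then condition~(a), that $U$ is insoluble and just infinite, is equivalent to $\mathfrak{g}$ being insoluble with no proper non-zero ideal; insolubility rules out the one-dimensional abelian case, so this amounts to $\mathfrak{g}$ being simple, which is~(b). The step that needs the most care is the isolator argument --- checking that $\widetilde N$ is genuinely a uniform normal subgroup with $\log\widetilde N$ an ideal and $[\widetilde N:N]<\infty$ --- which I expect to be the main, though essentially bookkeeping, obstacle; it is handled by reducing to the uniform subgroup $N_0$ and invoking the group/lattice dictionary of~\cite{DDMS99}, with only routine adjustments when $p=2$.
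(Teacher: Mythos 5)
Your proposal is correct, but it follows a genuinely different route from the paper's. The shared core is the dictionary between the $U$-action on $\Q_p\otimes_{\Z_p}L$ and the adjoint representation: the paper proves only the implication it needs, (b)$\Rightarrow$(c), by an \emph{integral} computation with Lazard's formula \cite[IV (3.2.7)]{La65}, showing that for a $\Q_pU$-submodule $\mathfrak{M}$ one has $[\underline{x},\underline{y}]_{\mathrm{Lie}}\in(\mathfrak{M}\cap L)+p^{k-1}L$ for all $k\ge 2$; you obtain both directions at once from $\operatorname{Ad}(\exp x)=\exp(\operatorname{ad}x)$ together with the differentiation trick over all $t\in\Z_p$, which is the same Lazard identity in rational clothing --- when writing this up you should make the limit argument (submodule $\Rightarrow$ $\operatorname{ad}$-invariant) explicit, since that is precisely where the paper's congruence computation lives, and note the $p=2$ normalisation ($[L,L]\subseteq 4L$) already at this stage. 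The larger divergence is at (a)$\Leftrightarrow$(b): the paper imports (a)$\Rightarrow$(b) from \cite[Prop.~III.6]{KLP97} and proves (c)$\Rightarrow$(a) by replacing a nontrivial closed normal subgroup $N$ by a uniformly powerful \emph{characteristic} open subgroup $K\le_\mathrm{o}N$ (so normality in $U$, hence $U$-invariance of the corresponding sublattice, is automatic), concluding $\dim N=\dim U$, with insolubility handled by observing that a soluble just infinite uniform group is procyclic and then acts trivially. You instead prove the intermediate equivalence ``$U$ just infinite $\Leftrightarrow$ $\Q_p\otimes_{\Z_p}L$ has no proper nonzero ideal'' in both directions --- exponentiating an isolated ideal one way, and using isolators the other way (the same Scholium to \cite[Thm.~9.10]{DDMS99} that the paper invokes in its nilpotent case) --- and then appeal to the solubility transfer between $U$ and its Lie algebra. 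This buys self-containedness (no appeal to \cite{KLP97}, and a symmetric treatment of all three conditions) at the cost of two additional standard inputs (isolator theory and the solubility transfer) and a couple of verifications left implicit: that $\exp(I\cap L)$ is indeed normal in $U$ (isolatedness of $I\cap L$ plus the convergent $\exp(\operatorname{ad})$ series), and that your $\widetilde N$ is a subgroup with $\log\widetilde N$ the isolation of $\log N_0$ in $L$; the isolator step could be avoided entirely by taking $N_0$ characteristic in $N$, as in the paper.
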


\begin{proof}
  It is known that (a) implies (b); for instance, see
  \cite[Prop.~III.6]{KLP97} and its proof.

  \smallskip
  
  Next we show that (b) implies (c).  Suppose that
  $\mathfrak{L} = \Q_p \otimes_{\Z_p} L$ is a simple
  $\Q_p$-Lie algebra.  Let $\mathfrak{M}$ denote a non-zero
  $\Q_p U$-submodule of $\mathfrak{L}$.  In order to show that
  $\mathfrak{M} = \mathfrak{L}$ it suffices to prove that
  $M = \mathfrak{M} \cap L$ is a Lie ideal in~$L$.
  
For clarity, we write $\underline{x}$ for
  $x \in U$ when it features as an element of the Lie lattice~$L$
  rather than as a group element.  Let $\underline{x} \in M$ and $\underline{y} \in L$. Since
  $M \subseteq_\mathrm{c} L$, it suffices to prove that
  \begin{equation} \label{equ:comm-contained-mod}
    [\underline{x},\underline{y}]_{\mathrm{Lie}} \in M + p^{k-1} L
    \qquad \text{for all $k \in \N$ with
      $k \ge 2$.}
  \end{equation}
  Let $k \in \N$ with
  $k \ge 2$.  From \cite[IV (3.2.7)]{La65} we see that
  \[
    \underline{x}.y^{p^k} - \underline{x} = \sum_{n=1}^\infty
    \tfrac{1}{n!} \, [\underline{x},_n p^k \underline{y}]_\mathrm{Lie} =
    p^ k \, [\underline{x}, \underline{y}]_\mathrm{Lie} +
    \sum_{n=2}^\infty \tfrac{p^{nk}}{n!} \, [\underline{x},_n
    \underline{y}]_\mathrm{Lie}.
  \]
  We recall that
  $(n!)^{-1} \in p^{-\lfloor (n-1)/(p-1) \rfloor} \Z_p$ and
  observe that for $n \ge 2$,
  \[
    nk - \lfloor (n-1)/(p-1) \rfloor \ge k + (k-1)(n-1) \ge k + (k-1).
  \]
  This gives
  \[
    [\underline{x}, \underline{y}]_\mathrm{Lie} \equiv_{p^{k-1}L}
    p^{-k} \big( \underline{x}.y^{p^k} - \underline{x} \big) \in \mathfrak{M},
  \]
  and \eqref{equ:comm-contained-mod} holds.

  \smallskip
  
  Finally, we argue that (c) implies (a).  Suppose that
  $\mathfrak{L} = \Q_p \otimes_{\Z_p} L$ is a simple 
    $\Q_p U$-module with non-trivial $U$-action.  Let $N \trianglelefteq_\mathrm{c} U$ be a
  non-trivial normal subgroup.  Then $N$ contains a uniformly powerful
  characteristic open subgroup~$K$, which supplies a corresponding
  non-trivial Lie sublattice $M \le_\mathrm{c} L$.  Since $K$ is
  normal in~$U$, the lattice $M$ is $U$-invariant.  This implies that
  the $\Q_p U$-submodule $\Q_p \otimes_{\Z_p} M$
  equals $\mathfrak{L}$ and gives
  \[
    \dim(N) = \dim(K) = \dim_{\Z_p}(M) =
    \dim_{\Z_p}(L) = \dim(U).
  \]
  Hence $N$ is open in~$U$. This proves that $U$ is just infinite and also implies that $U$ is insoluble, for otherwise the uniform group $U$ would be abelian (in fact procyclic) and the action of $U$ on $\Q_p \otimes_{\Z_p} L$ would be trivial, contrary to~(c).
\end{proof}

Now we prove Theorem~\ref{thm:just-infinite-or-nilpotent}.

\begin{proof}[Proof of Theorem~\ref{thm:just-infinite-or-nilpotent}]
  We treat the two situations, $G$ is just infinite and $G$ is
  nilpotent, one after the other.

  \smallskip
  
  \noindent (1) Suppose that $G$ is a just infinite $p$-adic analytic
  pro\nobreakdash-$p$ group.

  \smallskip
  
  \noindent\textsl{Case 1:} $G$ is soluble.
  In this situation $G$ is virtually abelian and, in fact, an
  irreducible $p$-adic space group: there is an abelian open normal
  subgroup $A \trianglelefteq_\mathrm{o} G$ such that
  $A \cong \Z_p^d$ is a $\Z_p$-lattice and the finite
  $p$-group $\Gamma =G/A$ acts faithfully on~$A$ and irreducibly
  on~$\Q_p \otimes_{\Z_p} A$.  Thus the claims follow
  from Proposition~\ref{pro:simple-L-P}.
  
  \smallskip
  
  \noindent\textsl{Case 2:} $G$ is insoluble.
  Let $U$ be a uniformly powerful open normal subgroup of~$G$, and let
  $L$ be the Lie lattice associated to~$U$.  The $\Q_p$-Lie
  algebra $\mathfrak{L} = \Q_p\otimes_{\Z_p} L$ is
  known to be semisimple of homogeneous type:
  $\mathfrak{L} = \bigoplus_{k=1}^q \mathfrak{L}_k$, for some
  $p$-power~$q$, with simple components
  $\mathfrak{L}_1 \cong \ldots \cong \mathfrak{L}_q$;
  see~\cite[Prop.~III.6]{KLP97}.  Furthermore, the proof
  of~\cite[Prop.~III.6]{KLP97} shows that~$G$ permutes transitively
  the $q$ components~$\mathfrak{L}_k$ of~$\mathfrak{L}$.  Using
  Lemma~\ref{lem:a-b-c-equiv}, we deduce that $\mathfrak{L}$ is a
  simple $\Q_p G$-module and the result follows from
  Proposition~\ref{pro:simple-L-P}.

  \medskip

  \noindent (2) Suppose that the $p$-adic analytic pro\nobreakdash-$p$
  group $G$ is nilpotent of class~$c$.    Referring to \cite[Prop.~3.9 and Thm.~4.2]{DDMS99},  let $j \in \N$ be such
  that $U = P_j(G)$ is uniformly powerful and $P_i(G)$ is powerfully
  embedded in $U$ for all $i \in \N$ with $i \ge j$.  Let $L$ be the
  $\Z_p$-Lie lattice associated to~$U$, equipped with the induced
  $G$-action, and let $L_i = \lambda_{i}(L)$ 
  denote the Lie sublattices corresponding to
  $P_{i+j}(G)$, which form with the lower $p$-series
  of~$L$.  

  Intersecting the terms $\gamma_s(G)$ of the lower central series of
  $G$ with $U$, we arrive at a descending series
  \[
    U = V_1 \ge V_2 \ge \ldots \ge V_{c+1} = 1
  \]
  of closed normal subgroups of $G$ such that
  $[V_s,G] \subseteq V_{s+1}$ for $1 \le s \le c$.  For each $s \in
  \{1, \ldots, c+1\}$, let
  \[
    W_s = \{ x \in U \mid \exists m \in \N : x^{p^m} \in V_s
    \} \le_\mathrm{c} U
  \]
  denote the isolator of $V_s$ in $U$, a powerful subgroup that is
  normal in~$G$; compare with \cite[Scholium to Thm.~9.10]{DDMS99}.
  Raising elements to some power commutes with applying
    the $G$-action; hence it is straightforward to check that
  $[W_s,G] \subseteq W_{s+1}$ for $1 \le s \le c$.  On the Lie lattice
  side, we arrive at a corresponding descending series
  \[
    L = M_1 \ge M_2 \ge \ldots \ge M_{c+1} = \{0\}
  \]
  of Lie sublattices such that $G$ acts trivially on each section
  $M_s/M_{s+1}$, $1 \le s \le c$.  Writing
  $\mathfrak{b} = \sum_{g\in G} (g-1) \Z_pG$ we deduce that,
  for $i \in \N$, 
  \[
   L_i = \lambda_{i}(L) = L.\aug_G^{\, i} 
   = L. \Big(
    \mathfrak{b} + p\Z_p \Big)^i \subseteq \sum_{s=0}^c
    p^{i-s} L.\mathfrak{b}^s
  \]
  and hence
  \[
   p^{i} L \subseteq L_{i} \subseteq p^{i-c} L,
  \]
  where negative powers of $p$ are interpreted by embedding $L$ into
  $\Q_p \otimes_{\Z_p} L$.  In other words, the filtration
  series $L_{i}$, $i \in \N$,  
  and
  $p^{i} L$, $i \in \N$,
  are $c$-equivalent.
  Arguing as in the proof of Proposition~\ref{pro:simple-L-P}, we
  obtain that every $H \le_\mathrm{c} G$ has strong Hausdorff
  dimension
 \[
   \hdim_G^{\mathcal{L}}(H) = \frac{\dim(H)}{\dim(G)}. \qedhere
 \]
\end{proof}


\smallskip 

\section{Large finite Hausdorff spectra}\label{sec:unbounded}

In this final section we establish
  Theorem~\ref{thm:uniform-bound}, yielding arbitrarily large finite
  Hausdorff spectra for one and the same pro\nobreakdash-$p$ group,
  when we are free to vary the underlying filtration series.

\smallskip 

By \cite[Prop.~2.1]{KlThZu19},
Theorem~\ref{thm:uniform-bound} is a consequence of the next result.

\begin{proposition}
  \label{pro:unbounded-spectrum}
  Let $L\cong\Z_p\oplus\Z_p$, and let $X \subseteq [0,1]$ be finite
  with $\{0,1\} \subseteq X$. Then there exists a filtration series
  $\mathcal{S}$ such that $\hspec^{\mathcal{S}}(L) = X$.
\end{proposition}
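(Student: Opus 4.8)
The plan is to reduce, as indicated, to building by hand a filtration series of $L=\Z_p e_1\oplus\Z_p e_2$ with Hausdorff spectrum equal to $X$. The closed subgroups of $L$ are precisely the $\Z_p$-sublattices; those of rank $0$ and rank $2$ have Hausdorff dimension $0$ and $1$ with respect to any filtration series (the rank-$2$ ones being exactly the open subgroups), which accounts for the obligatory elements $0,1\in X$. Every rank-$1$ closed subgroup is commensurable with $\Z_p v$ for a primitive $v\in L$, and Hausdorff dimension is a commensurability invariant, so it suffices to control $\hdim_L^{\mathcal S}(\Z_p v)$ for primitive $v$. For a filtration series $\mathcal S\colon L=L_0\ge L_1\ge\cdots$ one has $\hdim_L^{\mathcal S}(\Z_p v)=\varliminf_{i\to\infty} f_i(v)/\log_p\lvert L:L_i\rvert$, where $f_i(v)$ is the least $f\in\N_0$ with $p^fv\in L_i$. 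So the goal is to arrange $\{\hdim_L^{\mathcal S}(\Z_p v):v\ \text{primitive}\}=X\smallsetminus\{0\}$.

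Here is the construction; the terms $L_i$ are chosen thin in a direction that rotates slowly with $i$ (a diagonal filtration series would not suffice). Write $X\smallsetminus\{0,1\}=\{x_1,\dots,x_{m-1}\}$, pick distinct units $\gamma_1,\dots,\gamma_{m-1}\in\Z_p^\times$, set $v_j=e_1+\gamma_je_2$ and $\ell_0=\max_{j\ne j'}v_p(\gamma_j-\gamma_{j'})<\infty$. Fix a surjection $\iota\colon\N\to\{1,\dots,m-1\}$ taking every value infinitely often, a rapidly growing sequence $a_1<a_2<\cdots$ in $\N$ (e.g.\ $a_{i+1}=a_i^{\,2}$ with $a_1$ large), and put $b_i=a_{i-1}$ (with $a_0=1$), so $b_i\le a_i$ and $b_i=o(a_i)$. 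For each $i$ choose a primitive $Z_i=e_1+\zeta_ie_2$ agreeing with $v_{\iota(i)}$ to depth exactly $\lceil(1-x_{\iota(i)})a_i\rceil$, i.e.\ with $v_p(Z_i\wedge v_{\iota(i)})=\lceil(1-x_{\iota(i)})a_i\rceil$, and set $L_i=p^{a_i}\Z_p e_2+p^{b_i}\Z_p Z_i$. Since $\{e_2,Z_i\}$ is a $\Z_p$-basis, $L_i$ has elementary divisors $p^{a_i}\ge p^{b_i}$; hence $p^{a_i}\Z_p^2\subseteq L_i$ and $L_{i+1}\subseteq p^{\min(a_{i+1},b_{i+1})}\Z_p^2=p^{a_i}\Z_p^2\subseteq L_i$. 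The decisive point is the choice $b_{i+1}=a_i$: it makes the chain condition automatic no matter how the directions $Z_i$ move. One also checks $\bigcap_iL_i=0$ (a primitive $v$ not proportional to $e_2$ leaves $L_i$ as soon as $b_i>v_p(v\wedge e_2)$, while $e_2\notin L_i$ because $v_p(e_2\wedge Z_i)=0<a_i$), so $\mathcal S\colon(L_i)$ is a filtration series.

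Expanding a primitive $v$ in the basis $\{e_2,Z_i\}$ gives $f_i(v)=\max\bigl(0,\ a_i-v_p(v\wedge Z_i),\ b_i-v_p(v\wedge e_2)\bigr)$, and $\log_p\lvert L:L_i\rvert=a_i+b_i$. For $v=v_j$: in the indices $i$ with $\iota(i)=j$ one has $v_p(v_j\wedge Z_i)=\lceil(1-x_j)a_i\rceil$ and $v_p(v_j\wedge e_2)=0$, so $f_i(v_j)=a_i-\lceil(1-x_j)a_i\rceil$ lies within $1$ of $x_ja_i$ and, since $b_i=o(a_i)$, the ratio $f_i(v_j)/(a_i+b_i)\to x_j$; in the indices with $\iota(i)=j'\ne j$ one has $v_p(v_j\wedge Z_i)=v_p(v_j\wedge v_{j'})\le\ell_0$ (as $Z_i$ follows $v_{j'}$ past depth $\ell_0$), whence $f_i(v_j)\ge a_i-\ell_0$ and the ratio tends to $1$. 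Thus the accumulation set of $\bigl(f_i(v_j)/(a_i+b_i)\bigr)_i$ is $\{x_j,1\}$ and $\hdim_L^{\mathcal S}(\Z_p v_j)=x_j$. For every other primitive $v$ the integer $v_p(v\wedge Z_i)$ stays, for all large $i$, bounded by a constant depending only on $v$, so $f_i(v)\ge a_i-O(1)$ and $\hdim_L^{\mathcal S}(\Z_p v)=1$. Hence $\hspec^{\mathcal S}(L)=\{0,1\}\cup\{x_1,\dots,x_{m-1}\}=X$. The part that needs genuine care — and the main obstacle — is this last verification: one must confirm that the rapid growth of $(a_i)$ together with the uniform bound $\ell_0$ really force the lower limit attached to each $v_j$ to be \emph{exactly} $x_j$, and that no direction, in particular none lying $p$-adically close to one of the $v_j$, ends up with Hausdorff dimension outside $X$.
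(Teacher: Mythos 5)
Your construction is essentially the paper's own proof in different clothing: lattices $L_i$ with two widely separated elementary divisors, thin along a direction $Z_i$ that rotates through finitely many target directions (your $v_j=e_1+\gamma_je_2$, the paper's $z_k=x+(1-p^k)^{-1}y$), with the depth of $p$-adic agreement tuned to $(1-x_j)$ times the dominant exponent and with rapid growth ($a_{i+1}=a_i^2$, $b_{i+1}=a_i$ versus the paper's $2^{2^{i-1}},2^{2^i}$) making each step dominate the history. The computations you sketch -- the formula $f_i(v)=\max\bigl(0,\,a_i-v_p(v\wedge Z_i),\,b_i-v_p(v\wedge e_2)\bigr)$, the exact-depth evaluation along $\iota(i)=j$, and the uniform bound on $v_p(v\wedge Z_i)$ for all other primitive $v$ (including those $p$-adically close to some $v_j$, where the valuation stabilises at the finite value $v_p(\gamma-\gamma_{j})$ once the agreement depth exceeds it) -- are correct and suffice; the step you flag as the ``main obstacle'' is in fact already done by your $\ell_0$-type argument.

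The one point your write-up does not cover is the degenerate case $X=\{0,1\}$: then $X\smallsetminus\{0,1\}$ is empty, so there is no surjection $\iota$, no $Z_i$, and your recipe produces no filtration series at all. This is not completely vacuous to repair, since the obvious choice $L_i=p^iL$ has spectrum $\{0,\tfrac12,1\}$; but a fixed-direction unbalanced version of your own construction, say $L_i=p^{b_i}\Z_p e_1\oplus p^{a_i}\Z_p e_2$ with $b_i\to\infty$ and $b_i=o(a_i)$, gives spectrum exactly $\{0,1\}$ (the isolated direction $e_1$ has dimension $0$, every other $1$-dimensional subgroup has dimension $1$). The paper avoids this case split by assigning a designated target direction to \emph{every} element of $X$, including $0$ and $1$; you may either do the same or add the one-line fix above.
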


\begin{proof}
  We write $L = \Z_p x \oplus \Z_p y$ and
  $ X = \{\xi_1, \ldots, \xi_n\}$, where $n = \lvert X \rvert \ge 2$
  and $0 = \xi_1 < \ldots <\xi_n=1$ are real numbers.  For
  $k \in \{1,\ldots,n\}$ and $j \in \N_0$ we set
  \[
    t_{k,j} = \big\lfloor \tfrac{1}{k} \big( 2^{2^{k+jn}} -
    2^{2^{k+jn-1}} \big)(1 - \xi_k) \big\rfloor k \in k \N;
  \]
  thus $t_{k,j}$ is the maximal positive multiple of $k$ subject to
  the condition
  $t_{k,j} \le \big( 2^{2^{k+jn}} - 2^{2^{k+jn-1}} \big)(1 - \xi_k)$.
  We put $L_0 = L$, and for $k \in \{1,\ldots,n\}$ and $j \in\N_0$, we
  set
 \begin{equation*}
   L_{k+jn} = p^{2^{2^{k+jn-1}}} \Z_p \tilde x_{k,j} \oplus
   p^{2^{2^{k+jn}}} \Z_p y, \qquad \text{where} \quad \tilde x_{k,j} = x +
   \frac{1-p^{k+t_{k,j}}}{1-p^k} y.
 \end{equation*}
 We observe that
 \[
   p^{2^{2^{k+jn}}} L \subseteq L_{k+jn} \subseteq p^{2^{2^{k+jn-1}}}L \quad
   \text{and} \quad \log_p \lvert L : L_{k+jn} \rvert =
   2^{2^{k+jn-1}} + 2^{2^{k+jn}}.
 \]
 In particular, $\mathcal{S} \colon L =L_0 \ge L_1 \ge \ldots$
 forms a filtration series of~$L$.

 We decompose $\N$ into its residue classes modulo~$n$,
 \[
   \N = I_1 \cup \ldots \cup I_n, \qquad \text{where} \;\; I_k = k +
   n\N_0 \;\; \text{for} \;\; 1 \le k \le n,
 \]
 and set
 \[
   z_k = x + (1-p^k)^{-1}\, y, \qquad \text{for $1 \le k \le n$}.
 \]
 We observe that each of the $1$-dimensional subgroups $\Z_p z_k$,
 $1 \le k \le n$, is isolated in~$L$, i.e.\ equal to its
   isolator in $L$, and that any two of them intersect trivially:
 $\Z_p z_k \cap \Z_p z_l = \{0\}$ for $1 \le k <l \le n$.

 The Hausdorff spectrum of $L$ with respect to $\mathcal{S}$ trivially
 contains $0$ and~$1$.  Any further points in the spectrum arise from
 $1$-dimensional subgroups.  Let $H \le L$ be a $1$-dimensional
 subgroup.  We show below that, for each $k \in \{1, \ldots, n\}$,
 \begin{equation}
   \label{eq:hdim-incl-non-incl}
   \lim_{i \in I_k} \frac{\log_p \lvert H + L_i : L_i \rvert}{\log_p
     \lvert L : L_i \rvert} =
   \begin{cases}
     \xi_k & \text{if $H \le \Z_p z_k$,} \\
     1 & \text{otherwise.}
   \end{cases}   
 \end{equation}
 This implies that
   \begin{multline*}
     \hdim_L^{\mathcal{S}}(H) = \varliminf_{i\in\N} \frac{\log_p
       \lvert H + L_i:L_i \rvert}{\log_p \lvert L : L_i \rvert} =
     \min_{1 \le l \le n} \; \lim_{i \in I_l} \frac{\log_p \lvert H +
       L_i : L_i \rvert}{\log_p
       \lvert L : L_i \rvert} \\
     =
     \begin{cases}
       \xi_k & \text{if $H \le \Z_p z_k$, for a suitable $k \in \{1,
         \ldots, n\}$,} \\
       1 & \text{otherwise,}
     \end{cases}
   \end{multline*}
 which concludes the proof.

 \smallskip

 It remains to establish \eqref{eq:hdim-incl-non-incl}.  Let
 $k \in \{1,\ldots,n\}$, and first suppose that $H \le \Z_p z_k$.
 Then $\hdim_L^\mathcal{S}(H) = \hdim_L^\mathcal{S}(\Z_p z_k)$, and it
 suffices to show that $\hdim_L^\mathcal{S}(\Z_p z_k) = \xi_k$.  From
 the definition of $t_{k,j}$ we see that, for $j \in \N$,
 \begin{equation} \label{equ:mod-L-kjn-equiv}
   p^{2^{2^{j+kn-1}}} z_k
   \equiv_{L_{k+jn}} p^{2^{2^{j+kn-1}}} (z_k - \tilde x_{k,j} ) =
   p^{2^{2^{j+kn-1}}} p^{k+t_{k,j}} (1-p^k)^{-1} \, y.
 \end{equation}
 This yields
 \begin{equation*}
   \begin{split}
     \lim_{i \in I_k} \frac{\log_p \lvert \Z_p z_k + L_i:L_i
       \rvert}{\log_p \lvert L : L_i \rvert} %
     & = \lim_{j \to \infty}\frac{\log_p \lvert \Z_p z_k + L_{k+jn} :
       L_{k+jn} \rvert}{\log_p \lvert L : L_{k+jn} \rvert}  \\
     & = \lim_{j \to \infty} \frac{2^{2^{k+jn-1}} + \big(
       2^{2^{k+jn}}-2^{2^{k+jn-1}} - k - t_{k,j} \big)}{2^{2^{k+jn-1}}
       + 2^{2^{k+jn}}} \\
     & = \lim_{j \to \infty} \frac{2^{2^{k+jn-1}} +\big(
       2^{2^{k+jn}} - 2^{2^{k+jn-1}} \big) \big(1
       -(1-\xi_k)\big)}{2^{2^{k+jn-1}} + 2^{2^{k+jn}}}\\
     & = \xi_k.
   \end{split}
 \end{equation*}

 Now suppose that $H \not \le \Z_p z_k$. We write $H = \Z_p w$, where
 $w = p^m x + b y$ with $m\in\N_0$ and $b \ne p^m (1-p^k)^{-1}$.  Let
 $l \in\N_0$ such that
 $a = b - p^m (1-p^k)^{-1} \in p^l \Z_p \smallsetminus p^{l+1}\Z_p$.
 We observe from \eqref{equ:mod-L-kjn-equiv} that for every
 $j \in \N_0$ with $k+t_{k,j} >l$, we have
 \[
   p^{2^{2^{k+jn-1}}} z_k \equiv_{L_{k+jn}} p^{2^{2^{k+jn-1}}} a_j' \,
   y \quad \text{with} \quad a_j' = p^{k+t_{k,j}} (1-p^k)^{-1} \in
   p^{l+1}\Z_p
 \]
 and hence
 \[
   p^{2^{2^{k+jn-1}} -m} w = p^{2^{2^{k+jn-1}}} p^{-m} (p^m z_k + a y)
   \equiv_{L_{k+jn}} p^{2^{2^{k+jn-1}}} (a_j' + p^{-m}a) y,
 \]
 where we embed $L$ into $\Q_p \otimes_{\Z_p} L$ to interpret negative
 powers of~$p$.  This yields
 \begin{align*}
 &  \lim_{i\in I_k} \frac{\log_p \lvert H + L_i: L_i \rvert}{\log_p
     \lvert L : L_i \rvert} = \lim_{j \to \infty} \frac{\log_p \lvert
     \Z_p w + L_{k+jn} :
     L_{k+jn} \rvert}{\log_p \lvert L : L_{k+jn} \rvert} \\
 &\qquad\quad\qquad  =\lim_{j \to \infty} \frac{\big( 2^{2^{k+jn-1}}-m \big) + \big(
     2^{2^{k+jn}} - 2^{2^{k+jn-1}} + m - l \big)}{2^{2^{k+jn-1}} +
     2^{2^{k+jn}}} = 1. \qedhere
 \end{align*}
\end{proof}


\end{document}